\theoremstyle{plain}
\newtheorem{teor}{Theorem}[section]
\newtheorem{cor}[teor]{Corollary}
\newtheorem{prop}[teor]{Proposition}
\newtheorem{lemma}[teor]{Lemma}
\newtheorem{defn}[teor]{Definition}
\theoremstyle{definition}
\newtheorem{oss}[teor]{Remark}
\newcommand{\A}{\mathcal{A}}
\newcommand{\Csharp}{\mathscr{C}}
\newcommand{\C}{\mathbb{C}}
\newcommand{\D}{\mathscr{D}}
\newcommand{\E}{\mathscr{E}}
\newcommand{\F}{\mathscr{F}}
\newcommand{\Psharp}{\mathscr{P}}
\newcommand{\R}{\mathbb{R}}
\newcommand{\T}{\mathcal{T}}
\newcommand{\Z}{\mathbb{Z}}
\newcommand{\ob}{\operatorname{Ob}}
\renewcommand{\hom}{\operatorname{Hom}}
\newcommand{\spec}{\operatorname{spec}}
\newcommand{\id}{\operatorname{id}}
\newcommand{\st}{\text{ s.t. }}
\newcommand{\coh}{{\operatorname{Coh}}}
\newcommand{\qcoh}{{\operatorname{Qcoh}}}
\newcommand{\der}{{\mathrm{D}}}
\newcommand{\cone}{{\operatorname{Cone}}}
\newcommand{\dera}{{\der^b(A)}}
\newcommand{\perfa}{{\mathbf{Perf}(A)}}
\newcommand{\perf}{{\mathbf{Perf}}}
\newcommand{\add}[1]{\operatorname{add}\left\{#1\right\}}
\newcommand{\eend}{{\operatorname{End}}}
\newcommand{\stab}{{\operatorname{Stab}}}
\newcommand{\costab}{{\operatorname{co-Stab}}}
\newcommand{\gl}{{\operatorname{GL}}}
\begin{document}
\title{Fourier-Mukai functors and perfect complexes on dual numbers}
\author{FRANCESCO AMODEO AND RICCARDO MOSCHETTI}
\address{F.A.: Dipartimento di Matematica ``F.
	Enriques'', Universit{\`a} degli Studi di Milano, Via Cesare Saldini
	50, 20133 Milano, Italy}
	\email{francesco.amodeo@unimi.it}

\address{R.M.: Dipartimento di Matematica ``F. Casorati'', Universit{\`a}
	degli Studi di Pavia, Via Ferrata 1, 27100 Pavia, Italy}
	\email{riccardo.moschetti@unipv.it}

	\keywords{Derived categories, Fourier--Mukai functors, Dual numbers, $t$-structures, Stability conditions}
	\subjclass[2010]{14F05, 18E30}

\maketitle

\begin{abstract}
We show that every exact fully faithful functor from the category of perfect complexes on the spectrum of dual numbers to the bounded derived category of a noetherian separated scheme is of Fourier-Mukai type. The kernel turns out to be an object of the bounded derived category of coherent complexes on the product of the two schemes. We also study the space of stability conditions on the derived category of the spectrum of dual numbers.
\end{abstract}
\section{Introduction}

Fourier-Mukai functors play an important role in many geometric contexts. For example, if $S$ is a projective K3 surface, then any other K3 surface $Y$ for which there exists a Fourier-Mukai equivalence with kernel $\mathcal{E}$, $\Phi_{\mathcal{E}}:\der^b(S)\longrightarrow \der^b(Y)$ is isomorphic to a moduli space of stable sheaves on $S$ (\cite{O}). One of the main results about Fourier-Mukai functors states that, if $X$ and $Y$ are smooth projective varieties and $F:\der^b(X)\longrightarrow \der^b(Y)$ is an exact fully faithful functor, then $F$ is of Fourier-Mukai type. Also, the kernel is unique up to isomorphisms (\cite{O}).\\ It was expected the same for \emph{every} exact functor, until the counterexample found by Rizzardo and Van den Bergh in \cite{RV}, in which they find a non fully faithful functor between the derived categories of coherent sheaves. More generally, one would analyze the following question:\\
\emph{Given a functor between the derived categories of two varieties, what are the minimal hypotheses to guarantee such a functor being of Fourier-Mukai type?} 

One could try to weak the hypothesis either on the functor, for example as in \cite{R}, or on the varieties. The latter is the case that interests us, and in particular we drop the smoothness hypothesis. Let $X$ be a projective scheme, and let $\perf(X)$ be the subcategory of $\der(\qcoh(X))$ consisting of the objects which are quasi-isomorphic to bounded complexes of locally free sheaves of finite type on $X$. The category $\perf(X)$ is always included in $\der^b(X)$, in view of the natural equivalence $\der^b(X) \cong \der^b_{\operatorname{coh}}(\qcoh(X))$ where $\der^b_{\operatorname{coh}}(\qcoh(X))$ is the full subcategory of $\der^b(\qcoh(X))$ consisting of objects with coherent cohomology. If the scheme $X$ is smooth, then the subcategory $\perf(X)$ coincides with $\der^b(X)$.
Let now $Y$ be a noetherian separated scheme. We say a functor $F:\perf(X)\longrightarrow \der^b(Y)$ to be of Fourier-Mukai type if there exists an object $\mathcal{E}\in \der^b(X\times Y)$ called kernel of the functor, such that $F\cong \Phi_{\mathcal{E}}$, with: $$\Phi_{\mathcal{E}}:\perf(X)\longrightarrow \der^b(Y),\quad\quad\quad \Phi_{\mathcal{E}}(-):=\mathbf{R}(p)_*(\mathcal{E}\stackrel{\mathbf{L}}{\otimes}  q^*(-))$$
where $p:X\times Y\longrightarrow Y$ and $q:X\times Y\longrightarrow X$ are the projections.\\\\
A highly non-trivial result by Lunts and Orlov in \cite{LO} shows, by the use of DG categories, that if $X$ is projective such that the maximal torsion subsheaf of dimension zero $T_0(\mathcal{O}_X)\subset \mathcal{O}_X$ is trivial, $Y$ is noetherian and $F:\perf(X)\longrightarrow \der^b(Y)$ is an exact fully faithful functor, then $F$ is of Fourier-Mukai type.\\\\
The hypothesis $T_0(\mathcal{O}_X)=0$ is related with the use of ample sequences and it seems not to be a very natural assumption. What happens if we consider a projective scheme $X$ such that $T_0(\mathcal{O}_X)\neq0$?
The simplest example of such scheme is given by $\spec k$. Here the result is trivial (see \cite{CS} Remark 2.2) in view of the simple description of $\der^b(\spec k)$.
Thus, we could take in consideration a zero dimensional non-smooth scheme. In such way, the maximal torsion subsheaf of dimension zero is certainly not trivial. A basic model of such type of objects is given by the "double point scheme", which is the spectrum of the ring of dual numbers $A:=k[\epsilon]/(\epsilon^2)$. Our main result is the following:

\begin{teor}
Let $Y$ be a quasi-compact and separated scheme. Let: $$F:\xymatrix{\perfa \ar[r] & \der(\qcoh Y)}$$ be a fully faithful functor. Then
there is an object $\mathcal{E}\in
\der(\qcoh (\spec A \times Y))$ such that:
$${\Phi_{\mathcal{E}}}|_{\perfa}\cong F.$$
Furthermore, if $Y$ is noetherian and $F$ sends $\perfa$ to $\der^b(Y)$, then
$$\mathcal{E}\in \der^b(\spec A \times Y).$$
\end{teor}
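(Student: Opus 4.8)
The plan is to construct the kernel $\mathcal{E}$ explicitly by analyzing what the functor $F$ does to the natural generators of $\perf(A)$, and then verify that the resulting Fourier-Mukai functor agrees with $F$ on all of $\perf(A)$. The key structural fact is that $A = k[\epsilon]/(\epsilon^2)$ is local artinian with a very simple category of perfect complexes: $\perf(A)$ is generated by the free module $A$ itself, and the non-trivial objects are built from the cone of the endomorphism $\epsilon: A \to A$. So the whole functor $F$ is, up to a controlled amount of homotopy data, determined by the single object $F(A) \in \der(\qcoh Y)$ together with the image $F(\epsilon)$ of the multiplication-by-$\epsilon$ map. I expect that the earlier sections of the paper have already pinned down $\perf(A)$ in these terms (a tilting/generator description), so the first step is to invoke that description.

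**First I would** set $\mathcal{E}$ to be the object of $\der(\qcoh(\spec A \times Y))$ corresponding, under the identification of $\der(\qcoh(\spec A \times Y))$ with the derived category of $A$-linear complexes on $Y$ (since $\spec A \times Y = Y \times_{\spec k} \spec A$ and $A$ is a finite flat $k$-algebra), to the object $F(A)$ equipped with its residual $A$-action coming from $\hom_{\perf(A)}(A, A) \cong A$ acting through $F$. Concretely, $F(A)$ carries an action of $\eend_{\perf(A)}(A) = A$, hence $F(A)$ is naturally an object of the category of $A$-modules internal to $\der(\qcoh Y)$, which is exactly $\der(\qcoh(\spec A \times Y))$. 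Taking $\mathcal{E}$ to be this object, one computes $\Phi_{\mathcal{E}}(A) = \mathbf{R}p_*(\mathcal{E} \otimes^{\mathbf{L}} q^* A) = \mathbf{R}p_*(\mathcal{E}) \cong F(A)$, since $q^*A = \mathcal{O}_{\spec A \times Y}$ and pushing the $A$-linear object down to $Y$ forgets the action.

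**Next**, I would check that $\Phi_{\mathcal{E}}$ and $F$ agree not just on the generator $A$ but as functors. Because $A$ generates $\perf(A)$ and both $F$ and $\Phi_{\mathcal{E}}$ are exact, it suffices to show they induce the same map on morphisms $\hom_{\perf(A)}(A, A[i]) \to \hom(F(A), F(A)[i])$; fully faithfulness of $F$ is what makes this identification rigid. The construction of $\mathcal{E}$ was arranged precisely so that the $A$-action encoding these morphisms is transported correctly, so one gets a natural isomorphism $\Phi_{\mathcal{E}}|_{\perf(A)} \cong F$ by extending the agreement on $A$ along cones. This is the step where the simplicity of $A$ (only the single extra class $\epsilon$ to track) keeps the bookkeeping finite.

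**The hard part will be** the second assertion, namely boundedness and coherence of $\mathcal{E}$ when $Y$ is noetherian and $F$ lands in $\der^b(Y)$. A priori $\mathcal{E}$ sits in the unbounded category $\der(\qcoh(\spec A \times Y))$, and one must show $F(A) \in \der^b(Y)$ with coherent cohomology forces $\mathcal{E} \in \der^b(\spec A \times Y)$. The point is that $\mathcal{E}$, as an $A$-linear object, has the same underlying complex on $Y$ as $\mathbf{R}p_*\mathcal{E} = F(A)$ up to the finite rank-$2$ structure of $A$ over $k$; since $p$ is finite (hence affine) and $A$ is a finitely generated $k$-module, $\mathbf{R}p_*$ is exact and conservative, and reflects boundedness and coherence of cohomology. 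I would therefore argue that the cohomology sheaves of $\mathcal{E}$ are recovered from those of $F(A)$ by a finite filtration, so bounded coherent cohomology downstairs yields bounded coherent cohomology upstairs, giving $\mathcal{E} \in \der^b(\spec A \times Y)$. Verifying that this recovery is genuinely finite, and that no unbounded tail is hidden in the $A$-module structure, is the main technical obstacle.
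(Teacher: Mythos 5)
There is a genuine gap, and it sits exactly at the point your plan treats as a construction rather than as a problem. You pass from the ring map $A=\eend_{\perfa}(A)\to\eend_{\der(\qcoh Y)}(F(A))$ to ``an object of the category of $A$-modules internal to $\der(\qcoh Y)$, which is exactly $\der(\qcoh(\spec A\times Y))$''. These two categories are not the same: an object of $\der(\qcoh(\spec A\times Y))$ is represented by an honest complex of sheaves of $A\otimes_k\O_Y$-modules, i.e.\ a complex on $Y$ with a chain-level endomorphism $e$ satisfying $e^2=0$ strictly, whereas $F$ only hands you an endomorphism $F(\epsilon)$ whose square vanishes in the derived category, i.e.\ up to an unspecified homotopy. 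Rectifying such a homotopy action to a strict one is an obstruction-theoretic lifting problem (the obstructions live in negative self-Ext groups of $F(A)$; these do vanish here, but that must be identified and used, and uniqueness of the lift is a further issue). This lifting problem is precisely why the paper does not build $\mathcal{E}$ by hand from $F(A)$: its proof runs through the Lunts--Orlov enhancement machinery, producing $\mathcal{E}$ from a quasi-functor between $\mathcal{SF}(Perf(A))$ and $\mathcal{SF}(Perf(Y))$ via Theorems 6.4 and 9.10 of \cite{LO}.

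The second step of your outline has a related gap. Even granting $\Phi_{\mathcal{E}}(A)\cong F(A)$ compatibly with $\epsilon$, ``extending the agreement on $A$ along cones'' does not yield a natural isomorphism of functors: cones are not functorial in a triangulated category, and two exact functors that agree on a generator and on the morphisms between its shifts need not be isomorphic without DG-level input. The paper resolves this by reducing to the rigidity statement that an exact fully faithful endofunctor of $\perfa$ restricting to the identity on the one-object subcategory $\{A\}$ is isomorphic to the identity (Corollary \ref{cor:ExactToIdentity}); that statement is the payoff of Sections 3--5 and is what your argument would still need in some form. By contrast, your final step (boundedness and coherence of $\mathcal{E}$ when $Y$ is noetherian and $F$ lands in $\der^b(Y)$) is the comparatively easy part, and the paper simply quotes Corollary 9.13 of \cite{LO} for it.
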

Thus we show that the main result in \cite{LO} still holds in a case in which the maximal torsion subsheaf of dimension zero is not trivial, hence we do expect it is possible to avoid this hypothesis and prove the same result in a more general case.

In the last two sections we deal with the problem of classifying all the stability conditions on the category $\dera$. The main result is the following:

\begin{teor}
$\stab(\dera)$ is isomorphic to $\C$, the universal covering of $\C^*$.
\end{teor}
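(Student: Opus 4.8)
The plan is to compute $\stab(\dera)$ by first pinning down the central charges, then the stable objects, and finally reading off the topology from Bridgeland's theorem that the central charge is a local homeomorphism. First I would determine the numerical data. As $A$ is local Artinian with residue field $k$, and the free module fits into the non-split extension $0\to k\to A\to k\to 0$, one has $[A]=2[k]$ and $K_0(\dera)\cong\Z$, generated by $[k]$. A central charge is therefore a single complex number $Z([k])$, and the space of central charges is $\hom_{\Z}(K_0(\dera),\C)\cong\C$.

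The technical core is the classification of indecomposable objects of $\dera$ together with their endomorphism rings. Because $\epsilon^2=0$, every minimal bounded complex of free modules has all differentials of the form $\epsilon M_i$ with $M_i$ a matrix over $k$, and the composite of two consecutive differentials vanishes automatically; such complexes are exactly the representations of the linear quiver $\cdots\to\bullet\to\bullet\to\cdots$ with no relations. Hence the indecomposable perfect complexes are the ``interval'' complexes $P_{[a,b]}:=[A\xrightarrow{\epsilon}A\xrightarrow{\epsilon}\cdots\xrightarrow{\epsilon}A]$, and I would show that every indecomposable of $\dera$ is either such a $P_{[a,b]}$ or a shift $k[n]$. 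A short computation then gives $\eend(P_{[a,b]})$ a local, non-reduced $k$-algebra (already $\eend(A)=A$ is not a field), whereas $\eend(k[n])=k$. I expect this classification---ruling out any further indecomposables and, with it, any exotic bounded $t$-structure---to be the main obstacle.

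Granting this, I would identify the stable objects in an arbitrary $\sigma\in\stab(\dera)$. A stable object is indecomposable with division-ring endomorphisms, so by the previous step it must be some $k[n]$; and since the stable objects must generate $K_0$, at least one---hence, by shifting, every---$k[n]$ is stable. Computing classes, the $P_{[a,b]}$ have class $0$ or $\pm2[k]$ and are never stable (for instance $A$ is the self-extension of $k$ by $k$, strictly semistable). Thus the stable objects of any $\sigma$ are exactly the $k[n]$, and the whole slicing is reconstructed from the mass $m_k=|Z([k])|\in\R_{>0}$ and the real phase $\phi_k$ of $k$. Conversely, for each $(m_k,\phi_k)$ the assignment $Z([k])=m_k e^{i\pi\phi_k}$ on the heart $\mod A$ (suitably shifted) defines a genuine, locally finite stability condition in which every object is semistable of the phase of $k$; indeed $\mod A$ carries only the two trivial torsion pairs, so no non-degenerate stability function exists on it.

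Finally I would read off the topology. The map $\sigma\mapsto(m_k,\phi_k)$ is then a bijection $\stab(\dera)\cong\R_{>0}\times\R\cong\C$, under which the central-charge map $\sigma\mapsto Z([k])=m_k e^{i\pi\phi_k}$ becomes the exponential $\C\to\C^*$, whose fibres are the orbits of the double shift $[2]$. Bridgeland's deformation theorem guarantees that $\sigma\mapsto Z$ is a local homeomorphism onto the open set $\C^*\subset\C$, which forces this bijection to be a homeomorphism for the manifold topology on $\stab(\dera)$. Since the exponential is the universal covering of $\C^*$, we conclude $\stab(\dera)\cong\C$, as claimed.
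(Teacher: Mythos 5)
Your argument is essentially correct, but it takes a genuinely different route from the paper's. The paper first classifies all bounded $t$-structures on $\dera$ (Proposition \ref{prop:UniqueStructure}: up to shift only the standard one, with heart $\add{X_1,X_\infty}$), then parametrises stability conditions by pairs $(h,v)\in\Z\times H$ via Bridgeland's heart-plus-stability-function criterion, and finally gets the topology by showing that the subgroup $S\subset\widetilde{\gl^+}(2,\R)$ of rotations and scalings acts freely and transitively, so $\stab(\dera)\cong S\cong\C$. You instead classify the \emph{stable} objects directly: a stable object has a division ring of endomorphisms, $\eend(X_i)\cong k[\epsilon]/(\epsilon^2)$ is not one while $\eend(X_\infty[n])\cong k$ is, so the stable objects of any $\sigma$ are exactly the $X_\infty[n]$ (your $k[n]$) and the slicing is recovered as their extension closures. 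This bypasses the $t$-structure classification altogether and is arguably more economical for the stability question (though it quietly uses local finiteness --- automatic here since $K$ is discrete --- to get Jordan--H\"older filtrations, and it does not recover Section 7 as a by-product); your identification of the indecomposables via minimal complexes over the linear quiver is the same classification the paper imports from Kunzer and Keller--Yang. The one step to tighten is the last: a bijection intertwining two local homeomorphisms over $\C^*$ need not be a homeomorphism, since it could permute sheets of the covering discontinuously, so ``forces'' is too strong. Either compute Bridgeland's generalized metric explicitly --- the HN factors of any object are its shifted cohomologies for every $\sigma$, whence $d(\sigma_{(m,\phi)},\sigma_{(m',\phi')})=\max\{|\phi-\phi'|,|\log(m/m')|\}$ and your bijection is an isometry onto $\C$ --- or show that the locus where it agrees with a fixed continuous local section of $Z$ is open and closed. (To be fair, the paper's own passage from ``free and transitive action of $S$'' to ``homeomorphic to $S$'' is comparably terse.)
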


In order to prove the results concerning such a classification, we will exploit the study, developed in Sections $3$-$4$, on the category $\dera$ following an argument originally used by J{\o}rgensen and Pauksztello in \cite{COSTAB}, Holm, J{\o}rgensen and Yang in \cite{SPARSENESS} for the category $\perfa$.
\\\\
In Section $2$ we recall some results about the indecomposable elements of $\perfa$. Actually, it turns out that they are quite simple and manageable, and this allows us to make some concrete calculations.\\
In Section $3$ we classify the morphisms between indecomposable elements; as we are in a $k$-linear category, we describe the generators of the spaces of morphisms. Afterwards, in Section $4$, we show how the compositions between those morphisms works. \\
In Section $5$ we focus our attention on fully faithful endofunctors of $\perfa$. We prove that every exact fully faithful functor $F:\perfa\longrightarrow \perfa$ is an equivalence. More precisely, it is isomorphic to the composition of a shift and a push forward along an automorphism of $\spec A$. Also, we give a concrete example of an autoequivalence of $\perfa$ that is not exact. \\
In Section $6$ we recall some definitions about DG categories which will be used in the proof of the main theorem.
Eventually, Sections $7$ and $8$ are devoted to the study the $t$-structures on the category $\dera$ and the space $\stab(\dera)$ of the stability conditions.

\section{Indecomposable complexes of $\dera$}
Let $k$ be a field and consider the ring $A=k[\epsilon] / (\epsilon^2)$=$\{a+\epsilon b \st a,b\in k\}$. The spectrum of $A$ consists of a single point, which corresponds to the maximal ideal $(\epsilon)$.
We are interested in studying the subcategory $\perf(\spec A) \subset \der^b(\spec A)$. The category $\perf(\spec A)$ coincides with the full subcategory of compact objects: an object $X\in \der^b(\spec A)$ is compact if, for every collection ${Y_i}_{\{i\in I\}}$ of objects in $\der^b(\spec A)$, the natural morphism:
$$\bigoplus \mathrm{Hom}(X,Y_i) \longrightarrow \mathrm{Hom}(X,\oplus Y_i) $$
is an isomorphism. \\
Let $\perfa$ be the full subcategory of $\dera := \der^b(A-\operatorname{mod}_{\operatorname{fg}})$ consisting of bounded complexes of finitely generated projective modules. Since $\coh(\spec A)$ is equivalent to $A-\operatorname{mod}_{\operatorname{fg}}$, then $\perf(\spec A)$ and $\der^b(\spec A)$ are equivalent to $\perfa$ and $\dera$.
A way to study the objects and mophisms of $\dera$ is by focusing on the indecomposable complexes. In an additive category, $X$ is indecomposable if $X \cong Y\oplus Z$ implies $Y \cong 0$ or $Z \cong 0$.
A good context to study indecomposable objects is provided by Krull-Schmidt categories, which are explained in details in \cite{M}.

\begin{defn}
Let $\Csharp$ be an additive category such that $\eend_\Csharp(X)$ is a semiperfect ring for all $X \in \Csharp$ (in that case $\Csharp$ is called a pre-Krull-Schmidt category). $\Csharp$ is called a Krull-Schmidt category if every idempotent splits, i.e. for every $X$ in $\Csharp$ and for every $e \in \eend_\Csharp(X)$ such that $e^2=e$, there exist $Y$ in $\Csharp$ and two morphisms $p:X \rightarrow Y$ and $q:Y \rightarrow X$ such that $qp=e$ and $pq=1_Y$.
\end{defn}
An additive category in which every idempotent splits is also called \emph{Karoubian}, hence a Krull-Schmidt category is a pre-Krull-Schmidt category that is also Karubian. Note that every abelian category is Karoubian.
In \cite{V} one can find another definition of the split property: an idempotent $e:X \rightarrow X$ splits if and only if there exists a non trivial decomposition $X \cong Y \oplus Z$ with $e$ corresponding to the projection on $Y$. These two definitions are equivalent in a triangulated category, which is the case of the present paper.
Thanks to the following result, proven in \cite{M}, we can confine ourselves to indecomposable elements:

\begin{teor}
In a Krull-Schmidt category every object can be decomposed into a finite direct sum of indecomposable objects. Moreover this decomposition is unique up to isomorphism.
\end{teor}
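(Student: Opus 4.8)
The plan is to reduce everything to the structure of the endomorphism rings, exploiting that $\eend_\Csharp(X)$ is semiperfect for every object and that idempotents split. The basic dictionary is that a nonzero object $X$ is indecomposable if and only if $\eend_\Csharp(X)$ has no idempotents other than $0$ and $1$: a splitting of an idempotent $e\neq 0,1$ produces a nontrivial direct sum decomposition, and conversely a nontrivial decomposition yields such an idempotent. Combining this with semiperfectness, I would first record the key local-endomorphism lemma: if $X$ is indecomposable then $\eend_\Csharp(X)$ is local. Indeed, writing $R=\eend_\Csharp(X)$ and $J=\operatorname{rad}(R)$, semiperfectness gives that $R/J$ is semisimple and that idempotents lift modulo $J$; since $R$ has only trivial idempotents, so does $R/J$, and a semisimple ring with no nontrivial idempotents is a division ring, so $J$ is the unique maximal one-sided ideal and $R$ is local.

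For existence, I would use that a semiperfect ring $R=\eend_\Csharp(X)$ admits a decomposition $1=e_1+\dots+e_n$ of its identity into a finite sum of orthogonal primitive idempotents. Splitting each $e_i$ (the category is Karoubian) yields objects $X_i$ together with a decomposition $X\cong X_1\oplus\dots\oplus X_n$, and one checks $\eend_\Csharp(X_i)\cong e_iRe_i$. Because $e_i$ is primitive in the semiperfect ring $R$, the corner ring $e_iRe_i$ is local, hence has no nontrivial idempotents, so each $X_i$ is indecomposable. This produces a finite decomposition into indecomposables.

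For uniqueness, I would run the classical exchange argument of Krull--Remak--Schmidt--Azumaya, whose engine is precisely the locality just established. Suppose $X_1\oplus\dots\oplus X_m\cong Y_1\oplus\dots\oplus Y_n$ with all summands indecomposable, let $f$ be such an isomorphism with inverse $g$, and denote by $\iota,\pi$ and $\iota',\pi'$ the structural inclusions and projections on the two sides. From $\pi_1 g f \iota_1=\id_{X_1}$ one gets $\sum_j (\pi_1 g \iota'_j)(\pi'_j f \iota_1)=\id_{X_1}$ in the local ring $\eend_\Csharp(X_1)$. Since the non-units of a local ring form a two-sided ideal, at least one summand, say for $j=j_0$, is an automorphism of $X_1$; hence $\pi'_{j_0} f \iota_1\colon X_1\to Y_{j_0}$ is a split monomorphism, and as $Y_{j_0}$ is indecomposable this forces $X_1\cong Y_{j_0}$. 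A standard modification of $f$ by a unit then lets one split off this isomorphic pair and obtain an isomorphism $\bigoplus_{i\geq 2}X_i\cong\bigoplus_{j\neq j_0}Y_j$; induction on $m$ completes the argument and yields uniqueness up to isomorphism and permutation of the summands.

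The main obstacle is the uniqueness part, and within it the replacement step: turning the bare isomorphism $X_1\cong Y_{j_0}$ into a genuine cancellation that licenses the induction requires a careful matrix manipulation showing that $f$ can be adjusted so that $X_1$ is matched with $Y_{j_0}$ and removed without disturbing the other summands. The two ingredients that make this go through --- that non-units in a local ring form an ideal (so one term of the identity-sum must be invertible) and that splittings are available to realize the idempotents involved --- are exactly the hypotheses built into the definition of a Krull--Schmidt category, which is why the theorem holds at this level of generality.
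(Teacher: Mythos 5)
Your argument is correct: it is the classical Krull--Remak--Schmidt--Azumaya proof, with existence obtained by splitting a complete orthogonal set of primitive idempotents of the semiperfect ring $\eend_\Csharp(X)$ and uniqueness obtained from the exchange argument driven by the locality of endomorphism rings of indecomposables. The paper does not prove this theorem itself but cites \cite{M}, where essentially this same argument is carried out, so your route agrees with the intended one.
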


This theorem can be applied in our case thanks to the following

\begin{prop}
Let $X$ be a projective variety. Then $\perf(X)$ and $\der^b(X)$ are Krull-Schmidt categories.
\end{prop}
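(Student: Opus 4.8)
The plan is to verify the two conditions in the definition of a Krull--Schmidt category separately: that $\eend(M)$ is semiperfect for every object $M$ (the pre-Krull--Schmidt property) and that every idempotent splits (the Karoubian property).

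For the semiperfect condition I would invoke the projectivity of $X$ over the base field $k$. On a projective scheme the groups $\operatorname{Hom}_{\der^b(X)}(M,N)$ between bounded complexes of coherent sheaves are finite-dimensional $k$-vector spaces; in particular $\eend(M)$ is a finite-dimensional $k$-algebra, hence Artinian. An Artinian ring is semilocal (its quotient by the Jacobson radical is semisimple) and idempotents always lift modulo the radical, so it is semiperfect. The same argument applies verbatim to objects of $\perf(X)\subseteq\der^b(X)$. Thus both categories are pre-Krull--Schmidt.

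For the Karoubian property I would pass to the ambient category $\der(\qcoh X)$, which admits arbitrary coproducts and is therefore idempotent-complete by the theorem of B\"okstedt--Neeman. Given an idempotent $e\colon M\to M$ with $M\in\der^b(X)\cong\der^b_{\coh}(\qcoh X)$, splitting $e$ in $\der(\qcoh X)$ yields a decomposition $M\cong N\oplus N'$; since cohomology is additive, the cohomology sheaves of $N$ and $N'$ are direct summands of those of $M$, hence coherent and concentrated in finitely many degrees, so $N,N'\in\der^b(X)$ and the splitting already lives there. For $\perf(X)$ I would instead use that $\perf(X)$ is exactly the subcategory of compact objects of $\der(\qcoh X)$, as recalled in the text; the compact objects of any triangulated category form a thick subcategory, which is by definition closed under direct summands and hence Karoubian.

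The main obstacle is the Karoubian step, and specifically the verification that the summands produced by splitting an idempotent inside the large category $\der(\qcoh X)$ do not escape the bounded-coherent (respectively compact) subcategory; once this closure is established, and once the finite-dimensionality of the $\operatorname{Hom}$-spaces is in hand, the semiperfect part is essentially formal.
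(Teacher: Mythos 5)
Your argument is correct and follows essentially the same route as the paper: finite-dimensionality of the endomorphism algebras over $k$ (from projectivity) gives the semiperfect condition, and idempotent-completeness of $\der(\qcoh X)$ via B\"okstedt--Neeman combined with thickness of the subcategories gives the Karoubian condition. The only difference is that you spell out why $\der^b(X)$ and $\perf(X)$ are closed under the summands produced in the ambient category (additivity of cohomology, respectively compactness), which the paper simply asserts by calling the subcategories thick.
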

\begin{proof}
Since $X$ is projective, the endomorphism ring of every object of $\perf(X)$ and of $\der^b(X)$ is a finitely generated $k-$algebra of finite dimension, and then it is semiperfect. Moreover, $\der(\qcoh(X))$ is Karoubian, because it is a triangulated category with countably many direct sums. The subcategories $\perf(X)$ and of $\der^b(X)$ are thick and, then, Karoubian.
\end{proof}

\begin{prop} \label{prop:IndecomposableAndMaps}
Let $\Csharp$ be a Karoubian triangulated category, $\D$ an additive category and $F:\xymatrix{\Csharp \ar[r] & \D}$ a fully faithful additive functor. Then $F$ sends indecomposable objects of $\Csharp$ to indecomposable objects of $\D$.
\end{prop}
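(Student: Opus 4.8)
The plan is to translate indecomposability into the language of idempotent endomorphisms, and then to exploit the fact that a fully faithful additive functor induces a ring isomorphism on endomorphism rings. First I would record the standard observation that in any additive category a direct sum decomposition $F(X) \cong B \oplus C$ is the same datum as an idempotent in $\eend_\D(F(X))$: writing $i_B, i_C$ for the inclusions and $\pi_B, \pi_C$ for the projections, the endomorphism $\bar e := i_B \pi_B$ satisfies $\bar e^2 = \bar e$, and moreover $\bar e = 0$ forces $B \cong 0$ while $\bar e = \id_{F(X)}$ forces $C \cong 0$ (in each case one composes the vanishing relation with the appropriate inclusion and projection to kill the identity of the relevant summand).

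Next I would transport this idempotent back to $\Csharp$. Since $F$ is fully faithful and additive, the assignment $f \mapsto F(f)$ is a ring isomorphism $\eend_\Csharp(X) \xrightarrow{\sim} \eend_\D(F(X))$; in particular there is a unique $e \in \eend_\Csharp(X)$ with $F(e) = \bar e$, and faithfulness together with $F(e^2) = F(e)^2 = \bar e^2 = \bar e = F(e)$ gives $e^2 = e$. At this stage the Karoubian hypothesis on $\Csharp$ enters: the idempotents $e$ and $1_X - e$ both split, say $qp = e$, $pq = 1_Y$ and $q'p' = 1_X - e$, $p'q' = 1_Z$, and from $qp + q'p' = 1_X$ together with the vanishing of the cross terms $pq'$ and $p'q$ (which one checks using $e(1_X - e) = 0$) one obtains a decomposition $X \cong Y \oplus Z$. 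Because $X$ is indecomposable, one summand vanishes, which forces $e = 0$ or $e = 1_X$. Applying $F$ gives $\bar e = 0$ or $\bar e = \id_{F(X)}$, and by the first paragraph this yields $B \cong 0$ or $C \cong 0$; hence $F(X)$ is indecomposable.

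I expect the only genuinely delicate point to be the asymmetry between source and target. The target $\D$ is merely additive, so idempotents need not split there, and in particular I cannot argue directly that $\eend_\D(F(X))$ has no nontrivial idempotents. The entire force of the argument is therefore to manufacture the idempotent $\bar e$ in $\D$ out of the hypothetical decomposition, pull it back across the endomorphism-ring isomorphism into the Karoubian category $\Csharp$ where it \emph{can} be split, and then push the resulting triviality $e \in \{0, 1_X\}$ forward again. Verifying that the pulled-back endomorphism is honestly idempotent (which needs only faithfulness) and that $e = 0$ respectively $e = 1_X$ really does collapse the corresponding summand in $\D$ are the two small checks that make this transfer airtight. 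Note that the argument uses only the additive and Karoubian structure of $\Csharp$, so the triangulated hypothesis is not actually needed here and is present only as the ambient setting of the applications.
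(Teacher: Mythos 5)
Your proof is correct and follows essentially the same route as the paper's: both transfer the idempotent arising from a decomposition of $F(X)$ across the ring isomorphism $\eend_\Csharp(X)\cong\eend_\D(F(X))$ and use the Karoubian hypothesis together with indecomposability of $X$ to force that idempotent to be $0$ or $1_X$. The only difference is that you spell out, by splitting both $e$ and $1_X-e$, the step the paper merely asserts, namely that an indecomposable object of a Karoubian category admits no nontrivial idempotent endomorphisms.
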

\begin{proof}
Let $X^{\bullet}$ be an indecomposable object of $\Csharp$. Since $\Csharp$ is Karoubian, $\hom(X^{\bullet},X^{\bullet})$ does not contain any idempotent except the identity and zero. Suppose $F(X^{\bullet}) \cong Y^{\bullet}\oplus Z^{\bullet}$, with $Y^{\bullet}$ and $Z^{\bullet}$ non zero. Since $F$ is fully faithful and additive we have an isomorphism of rings:
$$\hom(X^{\bullet},X^{\bullet}) \cong \hom(F(X^{\bullet}),F(X^{\bullet})) \cong \hom(Y^{\bullet}\oplus Z^{\bullet},Y^{\bullet}\oplus Z^{\bullet}).$$
The last space contains the projection $\xymatrix{Y^{\bullet}\oplus Z^{\bullet} \ar[r] & Y^{\bullet}}$, which is an idempotent different from the identity and zero, giving a contradiction.
\end{proof}

\begin{defn} \label{defn:Indecomposable}
For every $i\in \mathbb{N}$, $i>0$ let:
 $$X_i:=\{\xymatrix{
0\ar[r] & A_{(-i)}\ar[r]^\epsilon & \cdots \ar[r]^\epsilon & A_{(-1)} \ar[r] & 0
  }\}.$$

 $$X_\infty:=\{\xymatrix{
\cdots \ar[r]^\epsilon & A\ar[r]^\epsilon & \cdots \ar[r]^\epsilon & A_{(-1)} \ar[r] & 0
  }\}.$$
	Where $A_{(l)}$ stands for the module $A$ in the position $l \in \Z$.
\end{defn}

As proven in \cite{KU}, Section 3 or \cite{KY}, example 3.7, $\{X_i[h], i>0, h \in \Z\}$ are the indecomposable objects of $\perfa$ and $\{X_\infty[h], h \in \Z\}$ are the indecomposable objects of $\dera \smallsetminus \perfa$.

\section{Maps between indecomposable complexes}
In this section we will study in details the morphisms in the category $\perfa$, that is equivalent to the homotopy bounded category of complexes of finitely generated free $A-$modules. These results were already known, see for example \cite{SPARSENESS} and \cite{KY}, here we make the computation to describe explicitely the generators of the spaces of morphisms.

Notice that for every complexes $X_i$, $X_j$ and for every integers $\alpha$, $\beta$:
$$\hom(X_i[\alpha],X_j[\beta]) \cong \hom(X_i,X_j[\beta-\alpha]).$$
Now we study $\hom(X_i,X_j[\alpha])$ for a certain integer $\alpha$.

We start with the morphisms in $\perfa$ by considering the space $V:=\hom(X_i,X_j[\alpha])$ with $i,j \in \mathbb{N}$. \\
If $i > j$, there are five cases, from $\alpha \leq -j$ to $\alpha \geq i$.

\begin{enumerate}
	\item  $\alpha \leq -j$.	
\begin{equation*}
\xymatrix@=15pt{
0\ar[r]\ar[d] & A\ar[r]\ar[d] & \cdots\ar[r]\ar[d] & A\ar[r]\ar[d] & 0\ar[r]\ar[d]	& \cdots\ar[r]\ar[d] & \cdots\ar[r]\ar[d] & \cdots\ar[r]\ar[d] & \cdots\ar[r]\ar[d] & 0\ar[d]			\\
0\ar[r]       & \cdots\ar[r]       & \cdots\ar[r] & \cdots\ar[r]       & \cdots\ar[r] & 0\ar[r]       & A\ar[r]       & \cdots\ar[r] & A\ar[r]       & 0						
				 }
\end{equation*}

 It is clear that in this case all the vertical arrows are zero and thus $V=0$.\\
				
  \item $-j < \alpha \leq 0$. By the commutativity of the squares we have:
				
				$$\xymatrix@=15pt{
						 0\ar[r]\ar[d]& A\ar[r]\ar[d] & \cdots\ar[r]\ar[d] & A\ar[r]\ar[d]^{\epsilon b_1} & \cdots\ar[r] & A\ar[r]\ar[d]^{\epsilon b_k} & 0\ar[r]\ar[d] & \cdots\ar[r]\ar[d] & 0\ar[d]\\
						0\ar[r] & \cdots\ar[r] & 0\ar[r] & A\ar[r] & \cdots\ar[r] & A\ar[r] & \cdots\ar[r] & A\ar[r] & 0
				 } $$ 	
				
				 with $k=j+\alpha$. Define: $$B:=\sum_{l=1}^k (-1)^{l+1}b_{k-l+1}.$$
				Up to homotopy we can reduce the diagram to be the following one:
    
							$$\xymatrix@=15pt{
						 0\ar[r]\ar[d]& A\ar[r]\ar[d] & \cdots\ar[r]\ar[d] & A\ar[r]\ar[d]^0 & \cdots\ar[r] & A\ar[r]\ar[d]^{\epsilon B}  & 0\ar[r]\ar[d] & \cdots\ar[r]\ar[d] & 0\ar[d]\\
						0\ar[r] & \cdots\ar[r] & 0\ar[r] & A\ar[r] & \cdots\ar[r] & A\ar[r] & \cdots\ar[r] & A\ar[r] & 0
				 } $$ 	

This shows that in this case the space of morphisms $V$ is isomorphic to $k$. \\
				 \item $0 < \alpha < i-j$. By the commutativity of the squares we have:

				 $$\xymatrix@=15pt{
					0\ar[r]\ar[d] & A\ar[r]\ar[d] & \cdots\ar[r]\ar[d] & A\ar[r]\ar[d]^{\epsilon b_1} & \cdots\ar[r] & A\ar[r]\ar[d]^{\epsilon b_k} & \cdots\ar[r]\ar[d] & A\ar[r]\ar[d] & 0\ar[d]\\
					0\ar[r] & \cdots\ar[r] & 0\ar[r] & A\ar[r] & \cdots\ar[r] & A\ar[r] & 0\ar[r] & \cdots\ar[r] & 0
			 }$$
				
				 Up to homotopy the morphism is zero, hence $V=0$. \\

				 \item $i-j \leq \alpha < i$ and $\alpha \neq 0$. This case is similar to $(2)$. By the commutativity of the squares we have:

				$$\xymatrix@=15pt{
						0\ar[r]\ar[d] & \cdots\ar[r]\ar[d] & 0\ar[r]\ar[d] & A\ar[r]\ar[d]^{a+\epsilon b_1} & \cdots\ar[r] & A\ar[r]\ar[d]^{a+\epsilon b_k} & \cdots\ar[r]\ar[d] & A\ar[r]\ar[d] & 0\ar[d]\\
												 0\ar[r]& A\ar[r] & \cdots\ar[r] & A\ar[r] & \cdots\ar[r] & A\ar[r] & 0\ar[r] & \cdots\ar[r] & 0
				 } $$ 	
				
				Up to homotopy we can reduce the diagram to be the following one:
				
						$$\xymatrix@=15pt{
						0\ar[r]\ar[d] & \cdots\ar[r]\ar[d] & 0\ar[r]\ar[d] & A\ar[r]\ar[d]^{a} & \cdots\ar[r] & A\ar[r]\ar[d]^{a} & \cdots\ar[r]\ar[d] & A\ar[r]\ar[d] & 0\ar[d]\\
												 0\ar[r]& A\ar[r] & \cdots\ar[r] & A\ar[r] & \cdots\ar[r] & A\ar[r] & 0\ar[r] & \cdots\ar[r] & 0
				 } $$ 	
				
				Thus $V$ is still isomorphic to $k$.

				 \item $i \leq \alpha$. This case is similar to $(1)$. Thus, $V$ is equal to zero.

\end{enumerate}

If $i=j$, the calculations are similar to the previous case. Note that $(3)$ can not hold in this case. However, if $\alpha=0$, by the commutativity of the squares we obtain:
				 $$\xymatrix@=15pt{
						 0\ar[r]\ar[d] & A\ar[r]\ar[d]^{a+\epsilon b_1} & \cdots\ar[r] & A\ar[r]\ar[d]^{a+\epsilon b_h} & 0\ar[d]\\
						 0\ar[r] & A\ar[r] & \cdots\ar[r] & A\ar[r] & 0
				 }$$
Define: $$C:=\sum_{l=1}^i (-1)^{l+1}b_{i-l+1}.$$
Up to homotopy we can reduce the diagram to be the following one:
 $$\xymatrix@=15pt{
						 0\ar[r]\ar[d] & A\ar[r]\ar[d]^{a} & \cdots\ar[r]& A\ar[r]\ar[d]^{a+\epsilon C} & 0\ar[d]\\
						 0\ar[r] & A\ar[r] & \cdots\ar[r] & A\ar[r] & 0
				 }$$ This shows that in this case the space of morphisms $V$ is equal to $k\oplus k$.\\

If $i<j$, the calculations are similar to the case $i>j$.

We can sum it all up in the following:

\begin{prop} \label{prop:ClassificationMorphisms}
Consider the space $V=\hom(X_i,X_j[\alpha])$:
\begin{itemize}
	\item If $-j < \alpha \leq \min\{0,i-j\}$ and $(i-j, \alpha) \neq (0,0)$ then $V$ has dimension $1$ and it is generated by $\epsilon^i_{j[\alpha]}$. These morphisms are called of $k_\epsilon$-type.
	\item If $\max\{0,i-j\} \leq \alpha < i$ and $(i-j, \alpha) \neq (0,0)$ then $V$ has dimension $1$ and it is generated by $1^i_{j[\alpha]}$. These morphisms are called of $k_1$-type.
	\item If $i=j$ and $\alpha=0$ then $V$ has dimension $2$ and it is generated by both $\epsilon^i_{i[0]}$ and $1^i_{i[0]}$. These morphisms are called of $k^2$-type
	\item $V=\{0\}$ for all the remaining cases.
\end{itemize}
A morphism between two indecomposable objects can be described by the couple $(a,b)$ of elements of $k$, where $a$ is the coefficient of the generator $1$ and $b$ is the coefficient of the generator $\epsilon$.
\end{prop}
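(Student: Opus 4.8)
The plan is to compute each space as chain maps modulo homotopy in the bounded homotopy category of finitely generated free $A$-modules, to which $\perfa$ is equivalent, reading off the answer from an explicit diagram chase. First I would invoke the shift isomorphism $\hom(X_i[\alpha],X_j[\beta])\cong\hom(X_i,X_j[\beta-\alpha])$ recorded above to reduce to $V=\hom(X_i,X_j[\alpha])$ for a single integer $\alpha$. Writing both complexes out termwise, every component of a chain map $f=(f^n)$ is an element $f^n=a_n+\epsilon b_n$ of $A=\hom_A(A,A)$, nonzero only on the overlap of the supports of $X_i$ and $X_j[\alpha]$. When that overlap is empty, i.e. $\alpha\le -j$ or $\alpha\ge i$, no component can be nonzero and $V=0$, which disposes of the two extreme ranges at once.

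Next I would impose the chain-map equations $\epsilon f^n=f^{n+1}\epsilon$. Because $\epsilon(a_n+\epsilon b_n)=\epsilon a_n$, these force the scalar parts $a_n$ to agree on consecutive overlapping degrees and to vanish whenever a differential of one complex runs into a zero term of the other at an end of the overlap. Thus the data reduce a priori to a single scalar $a$ (possibly forced to $0$) together with the coefficients $b_n$; which of these genuinely contribute is governed by how the two supports are positioned, and in particular by which endpoints of the overlap are terminal in $X_i$ and which are terminal in $X_j[\alpha]$.

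The heart of the argument, and the step I expect to be the main obstacle, is the homotopy reduction together with the verification that the surviving class is nonzero. A homotopy is a family $h^n\colon (X_i)^n\to (X_j[\alpha])^{n-1}$ whose boundary alters $f$ by $\epsilon h^n+h^{n+1}\epsilon$. Peeling off one overlapping term at a time, I would show that the tuple $(\epsilon b_1,\dots,\epsilon b_k)$ is homotopic to a single coefficient $\epsilon B$ with $B=\sum_{l=1}^k(-1)^{l+1}b_{k-l+1}$, the alternating signs being dictated by the successive homotopies, and likewise that the endomorphism case collapses to $a+\epsilon C$ with $C$ as above. The delicate part is the converse direction: one must confirm that no further homotopy annihilates the reduced generator, so that $V$ has dimension exactly $1$ (or $2$) rather than $0$. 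This nonvanishing, and the determination of whether it is the scalar or the $\epsilon$-coefficient (or both) that survives, is precisely what the endpoint configuration of the overlap decides; in the intermediate gap $\min\{0,i-j\}<\alpha<\max\{0,i-j\}$ the overlap sits in the interior of $X_i$ on both sides, enough homotopies are available to nullify the entire map, and $V=0$.

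Finally I would assemble the four ranges. The $\epsilon$-coefficient alone survives on $-j<\alpha\le\min\{0,i-j\}$, giving the one-dimensional $k_\epsilon$-type spaces with generator $\epsilon^i_{j[\alpha]}$; the scalar alone survives on $\max\{0,i-j\}\le\alpha<i$, giving the $k_1$-type spaces with generator $1^i_{j[\alpha]}$; both survive precisely when $i=j$ and $\alpha=0$, giving the two-dimensional $k^2$-type; and $V=0$ in all remaining cases. The range $i<j$ I would treat by the symmetric diagram chase obtained by interchanging source and target, which produces the same bounds with $i-j<0$, so that $\min\{0,i-j\}=i-j$ and $\max\{0,i-j\}=0$. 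The closing assertion of the statement is then simply the choice of the basis $\{1^i_{j[\alpha]},\epsilon^i_{j[\alpha]}\}$, recording each morphism by its pair of coordinates $(a,b)$.
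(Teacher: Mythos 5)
Your proposal is correct and follows essentially the same route as the paper: reduce by the shift isomorphism, analyse the commutative-square constraints on the termwise components $a_n+\epsilon b_n$ according to how the supports of $X_i$ and $X_j[\alpha]$ overlap, and collapse the $\epsilon$-coefficients by successive homotopies to the single alternating sum $B=\sum_{l=1}^k(-1)^{l+1}b_{k-l+1}$, exactly as in the paper's case-by-case computation preceding the proposition. The only slight imprecision is that in the intermediate gap for $i<j$ it is $X_i$ whose support lies in the interior of that of $X_j[\alpha]$ rather than the other way around, but this does not affect the argument.
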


Note that the morphisms of $k_1$-type and $k_\epsilon$-type correspond, respectively, to the morphisms in $F^+$ and in $F^-$ as described in \cite{SPARSENESS}.

\begin{cor} \label{cor:gradedring}
The graded $k$-vector space $\hom^*(X_i,X_i)$ is uniquely determined by $i$
\end{cor}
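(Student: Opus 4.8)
The plan is to deduce the corollary directly from Proposition \ref{prop:ClassificationMorphisms} by specializing the classification to the diagonal case $j=i$. Since $\hom^*(X_i,X_i)=\bigoplus_{\alpha\in\Z}\hom(X_i,X_i[\alpha])$, it suffices to record, for each $\alpha$, the dimension of the graded piece $\hom(X_i,X_i[\alpha])$ and to check that this dimension is a function of $i$ and $\alpha$ only. First I would set $j=i$ throughout the three bullet points of the proposition, so that $i-j=0$ and hence $\min\{0,i-j\}=\max\{0,i-j\}=0$.

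With this substitution the two rank-one conditions collapse to half-lines: the $k_\epsilon$-type range $-j<\alpha\leq\min\{0,i-j\}$ (excluding $(i-j,\alpha)=(0,0)$) becomes $-i<\alpha<0$, and the $k_1$-type range $\max\{0,i-j\}\leq\alpha<i$ (again excluding the origin) becomes $0<\alpha<i$; the remaining value $\alpha=0$ falls under the $k^2$-type clause. Reading off dimensions, I would conclude that $\hom(X_i,X_i[\alpha])$ is one-dimensional exactly when $0<|\alpha|<i$, is two-dimensional when $\alpha=0$, and vanishes for $|\alpha|\geq i$. Thus the graded vector space is symmetric about degree $0$, with a two-dimensional piece at the center and one-dimensional pieces out to $\pm(i-1)$, and total dimension $2+2(i-1)=2i$.

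Finally I would observe that every nonzero graded piece is a copy of $k$ (or of $k^2$ in degree $0$), so that the only datum varying with the object is the integer $i$ bounding the support; in particular the field $k$ enters uniformly and the graded dimensions are determined by $i$ alone. There is no real obstacle here, as all the work has already been carried out in the preceding classification; the only point requiring care is to check that the degree-zero piece is governed by the separate $k^2$-type clause rather than being double-counted by the two rank-one ranges, which is precisely why the proposition excludes $(i-j,\alpha)=(0,0)$ from those ranges. The corollary is then the bookkeeping step that packages these diagonal endomorphism dimensions into a single picture depending only on $i$.
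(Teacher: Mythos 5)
Your proposal is correct and is essentially the paper's own argument: both simply specialize Proposition \ref{prop:ClassificationMorphisms} to the diagonal case $j=i$ and read off the graded dimensions. One small remark: the corollary is actually invoked later in the converse direction (an isomorphism $\hom^*(X_i,X_i)\cong\hom^*(X_j,X_j)$ must force $i=j$), and your explicit bookkeeping --- support exactly $|\alpha|<i$, total dimension $2i$ --- delivers precisely that injectivity, so nothing is missing.
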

\begin{proof}
If we have $\hom^*(X_i,X_i) \cong \hom^*(X_j,X_j)$, is straightforward to prove that $i=j$ by the result of the previous proposition.
\end{proof}

\begin{oss} \label{oss:NonPerfectIndecomposable}
The results of this proposition can be extended to $\dera$; one can easly prove that
\begin{itemize}
\item $\hom(X_\infty,X_\infty[h])$ is generated by $1$ if $h \geq 0$ and is $0$ otherwise.
\item $\hom(X_\infty,X_i[h])$ is generated by $\epsilon$ if $-i < h \leq 0$ and is $0$ otherwise.
\item $\hom(X_i,X_\infty[h])$ is generated by $1$ if $0 \leq h < i$ and is $0$ otherwise.
\end{itemize}
\end{oss}

As a consequence of the previous proposition, for all $X^\bullet$ and $Y^\bullet$ in $\perfa$ there is the following isomorphism:
$$\mathrm{Hom}_{\perfa}(X^{\bullet}, Y^{\bullet}) \cong \mathrm{Hom}_{\perfa}(Y^{\bullet}, X^{\bullet}).$$
More generally, Serre duality holds in $\perfa$. This is a particular case of Theorem 6.7 in \cite{H}.

\section{Compositions}
We are now wondering how the composition of morphisms in $\perfa$ works; that is, given two morphisms between indecomposable objects:
$$f:\xymatrix{
X_i \ar[r] & X_j[\alpha]
} \text{ and } g:\xymatrix{
X_j[\alpha] \ar[r] & X_k[\beta],
}$$
we are asking for what type the morphism $g\circ f$ is. We are going to study the compositions of the generators of the morphism described in Proposition \ref{prop:ClassificationMorphisms}. The situation is summed up in the following table.
Clearly, if either $f$ or $g$ is the zero morphism, then also the composition $g \circ f$ is zero.

$$\begin{tabular}{c||c|c|c}

  $\circ$    &    $0$     &     $1^i_{j[\alpha]}$    &    $\epsilon^i_{j[\alpha]}$ \\
\hline \hline
  $0$            &    $0$     &     $0$                              &     $0$   \\
\hline
$1^{j[\alpha]}_{k[\beta]}$ & $0$ & (i) & (ii) \\
\hline
$\epsilon^{j[\alpha]}_{k[\beta]}$ & $0$ & (iii) & (iv) \\
\end{tabular}$$\\

Proposition \ref{prop:ClassificationMorphisms} gives the conditions for the generators to be well defined. \\
Case (i) holds when $\max\{0,i-j\} \leq \alpha < i$ and $\max\{0,j-k\} \leq \beta-\alpha < j$. \\
Case (ii) holds when $-j < \alpha \leq \min\{0,i-j\}$ and $\max\{0,j-k\} \leq \beta-\alpha < j$. \\
Case (iii) holds when $\max\{0,i-j\} \leq \alpha < i$ and $-k < \beta-\alpha \leq \min\{0,j-k\}$. \\
Case (iv) holds when $-j < \alpha \leq \min\{0,i-j\}$ and $-k < \beta-\alpha \leq \min\{0,j-k\}$.

\begin{enumerate}
\item[(i)] The composition of $1^{j[\alpha]}_{k[\beta]} \circ 1^i_{j[\alpha]}$ is a morphism from $X_i$ to $X_k[\beta]$. If $\max\{0,i-k\} \leq \beta < i$ holds, that is the condition of having a morphism of $k_1$-type between $X_i$ and $X_k[\beta]$, then $1^{j[\alpha]}_{k[\beta]} \circ 1^i_{j[\alpha]}= 1^i_{k[\beta]}$, as shown in the following diagram:

				 $$\xymatrix@=15pt{
						 &        &        & A\ar[r]\ar[d]^{1} & \cdots\ar[r]\ar[d]^{1} & \cdots\ar[r]\ar[d]^{1} & \cdots\ar[r]\ar[d]^{1} & A \\
						 & A\ar[r]\ar[d]^1      & \cdots\ar[r]\ar[d]^1 & A\ar[r]\ar[d]^{1} & \cdots\ar[d]^1\ar[r] & \cdots\ar[d]^1\ar[r] & A      &    \\
					 A\ar[r] & \cdots\ar[r] & \cdots\ar[r] & A\ar[r] & \cdots\ar[r] & A.      &        &	
				 }$$

Otherwise $1^{j[\alpha]}_{k[\beta]} \circ 1^i_{j[\alpha]}=0$\\

\item[(ii)] The composition $1^{j[\alpha]}_{k[\beta]} \circ \epsilon^i_{j[\alpha]}$ is a morphism from $X_i$ to $X_k[\beta]$. If $-k < \beta \leq \min\{0,i-k\}$ holds, that is the condition of having a morphism of $k_\epsilon$-type between $X_i$ and $X_k[\beta]$, then $1^{j[\alpha]}_{k[\beta]} \circ \epsilon^i_{j[\alpha]}=\epsilon^i_{k[\beta]}$, as shown in the following diagram:

 $$\xymatrix@=15pt{
A\ar[r] & \cdots\ar[r] & \cdots\ar[r]\ar[d]^0 & \cdots\ar[r]\ar[d]^0 & A\ar[d]^{\epsilon} & & & \\
& & A\ar[r]\ar[d]^1 & \cdots\ar[r]\ar[d]^1 & A\ar[r]\ar[d]^{1} & \cdots\ar[r]\ar[d]^1 & \cdots\ar[r]\ar[d]^1 & A \\
& A\ar[r] & \cdots\ar[r] & \cdots\ar[r] & A\ar[r] & \cdots\ar[r] & A.	
				 }$$\\

Otherwise $1^{j[\alpha]}_{k[\beta]} \circ \epsilon^i_{j[\alpha]}=0$.
\\

\item[(iii)]  The composition of $ \epsilon^{j[\alpha]}_{k[\beta]} \circ  1^i_{j[\alpha]}$ is a morphism from $X_i$ to $X_k[\beta]$. If $-k < \beta \leq \min\{0,i-k\}$ holds, that is the condition of having a morphism of $k_\epsilon$-type between $X_i$ and $X_k[\beta]$, then $ \epsilon^{j[\alpha]}_{k[\beta]} \circ  1^i_{j[\alpha]}=\epsilon^i_{k[\beta]}$, as shown in the following diagram:

 $$\xymatrix@=15pt{
       &        & A\ar[r]\ar[d]^1 & \cdots\ar[r]\ar[d]^1 & \cdots\ar[r]\ar[d]^1 & \cdots\ar[r] &  A &\\
A\ar[r]  & \cdots\ar[r] & A\ar[r]\ar[d]^\epsilon & \cdots\ar[r]\ar[d]^0 & A\ar[d]^0      &        &  &\\
       &  & A\ar[r] & \cdots\ar[r] & \cdots\ar[r] & \cdots\ar[r]      & \cdots\ar[r] & A.
				 }$$\\

Otherwise $\epsilon^{j[\alpha]}_{k[\beta]} \circ  1^i_{j[\alpha]}=0$.
\\

\item[(iv)] The composition of two morphisms of $k_{\epsilon}$-type is always zero.
\end{enumerate}

As in Remark \ref{oss:NonPerfectIndecomposable}, the above results hold, with the same inequalities, also in $\dera$.

\section{Fully Faithful endofunctors of $\perfa$}

In this section we will deal with $k-$linear functors that commute with the shifts. Two simple examples of such type of functors are given by the shift $[n]$ and the push forward $\mathbf{R}f_*$ along a proper morphism $f$ of projective varieties.

Also, these two functors are exact and of Fourier-Mukai type; see \cite{HUY} for a deeper discussion.

For a more general analysis, in this section we will suppose $F$ to be fully faithful but we will not require the functor to be exact. Although this hypothesis is needed for the main result of this section, Corollary \ref{cor:AutoequivalenceToFM}, we keep this general setting in the section because sometimes it is useful to study functor that are not exact but satisfy the other properties. As an example, the reconstruction theorem from Fano and anti-Fano varieties works just considering equivalences that are not exact, see Remark 4.12 of \cite{HUY}.
Moreover we will manage to exploit this general context by giving an explicit example of an equivalence that is not exact, see Corollary \ref{cor:counterexample}.

\begin{prop} \label{prop:FunctorIsomorphicShift}
Let $F:\xymatrix{\perfa \ar[r] & \perfa}$ be a fully faithful functor. On the objects, $F$ is isomorphic to the shift functor $[n]$ for some integer $n$.
\end{prop}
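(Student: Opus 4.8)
The plan is to reduce everything to the classification of the indecomposables and of the graded morphism spaces established in Sections $2$ and $3$. First I would observe that $\perfa$ is a Karoubian triangulated category (by the Krull--Schmidt proposition above), so Proposition \ref{prop:IndecomposableAndMaps} applies and the fully faithful functor $F$ carries indecomposables to indecomposables. Since the indecomposable objects of $\perfa$ are exactly the $X_i[h]$ with $i>0$ and $h\in\Z$, and $F$ lands in $\perfa$, for every $i$ there exist $\sigma(i)>0$ and $n_i\in\Z$ with $F(X_i)\cong X_{\sigma(i)}[n_i]$.

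Next I would pin down $\sigma$. Since $F$ commutes with shifts and is fully faithful, for each $h$ one has $\hom(X_i,X_i[h])\cong\hom(F(X_i),F(X_i[h]))\cong\hom(F(X_i),F(X_i)[h])$; summing over $h$ gives an isomorphism of graded vector spaces $\hom^*(X_i,X_i)\cong\hom^*(X_{\sigma(i)},X_{\sigma(i)})$, where the shift $[n_i]$ is absorbed by the shift invariance $\hom(X_a[\alpha],X_b[\beta])\cong\hom(X_a,X_b[\beta-\alpha])$. Corollary \ref{cor:gradedring} then forces $\sigma(i)=i$, so $F(X_i)\cong X_i[n_i]$.

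The heart of the argument, and the step I expect to be the main obstacle, is showing that the integer $n_i$ does not depend on $i$. For this I would compare two indices $i,j$ in all degrees at once. Full faithfulness together with commutation with shifts yields, for every $\alpha\in\Z$, $$\dim_k\hom(X_i,X_j[\alpha])=\dim_k\hom(F(X_i),F(X_j[\alpha]))=\dim_k\hom(X_i,X_j[\alpha+n_j-n_i]).$$ Thus the function $\alpha\mapsto\dim_k\hom(X_i,X_j[\alpha])$ is periodic of period $n_j-n_i$. But Proposition \ref{prop:ClassificationMorphisms} shows this function is supported in the bounded interval $-j<\alpha<i$ (it vanishes for $\alpha\le -j$ and for $\alpha\ge i$) and is not identically zero, since a $k_1$-type generator exists for every $\alpha$ in the nonempty range $\max\{0,i-j\}\le\alpha<i$. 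A nonzero function with bounded support cannot be periodic of nonzero period, so $n_j=n_i$; hence $n_i=n$ is a constant.

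Finally I would assemble the conclusion. We have $F(X_i)\cong X_i[n]$ for all $i$, and since $F$ commutes with shifts, $F(X_i[h])\cong F(X_i)[h]\cong X_i[h+n]=(X_i[h])[n]$, so $F$ agrees with the shift $[n]$ on every indecomposable object. Because $\perfa$ is Krull--Schmidt, every object is a finite direct sum of indecomposables; as both $F$ and $[n]$ are additive, it follows that $F(X^\bullet)\cong X^\bullet[n]$ for every $X^\bullet\in\perfa$, which is the desired object-level isomorphism.
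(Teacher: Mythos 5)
Your proposal is correct and follows essentially the same route as the paper: indecomposables go to indecomposables, Corollary \ref{cor:gradedring} pins down $\sigma(i)=i$, and the translation-invariance of $\alpha\mapsto\dim_k\hom(X_i,X_j[\alpha])$ forces $n_i=n_j$. The only difference is that you spell out the final step (a nonzero function with bounded support cannot be invariant under a nonzero translation), which the paper dispatches with ``by a similar argument to Corollary \ref{cor:gradedring}.''
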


\begin{proof}
$F$ commutes with the shifts, so we can focus on the image of an indecomposable object $X_i$  for any integer $i>0$. By Proposition \ref{prop:IndecomposableAndMaps}, $F$ sends indecomposable objects to indecomposable objects, so $F(X_i) \cong X_j[\alpha]$ for some integer $j>0$ and some $\alpha$. $F$ is also fully faithful, thus, for every integer $\beta$ one has:
$$\hom(X_i,X_i[\beta]) \cong \hom(F(X_i),F(X_i)[\beta]) \cong \hom(X_j[\alpha],X_j[\alpha+\beta]) \cong \hom(X_j,X_j[\beta]).$$

It follows from Corollary \ref{cor:gradedring} that $i=j$, and this proves that $F(X_i) \cong X_i[h_i]$ for some integer $h_i$. Actually $h_i$ does not depend on $i$. For every integer $\beta$ one has:
$$\hom(X_i,X_j[\beta]) \cong \hom(F(X_i),F(X_j)[\beta]) \cong \hom (X_i[h_i],X_j[h_j+\beta])\cong \hom(X_i,X_j[h_j-h_i+\beta]).$$

By a similar argument of Corollary \ref{cor:gradedring}, one concludes that $h_j$ has to be equal to $h_i$.
\end{proof}

\begin{cor} \label{cor:fullyfaithfulequivalence}
Every fully faithful functor $F:\xymatrix{\perfa \ar[r] & \perfa}$ is an equivalence.
\end{cor}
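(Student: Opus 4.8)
The plan is to combine the structure result just proved with the Krull--Schmidt property of $\perfa$ to establish that $F$ is essentially surjective; since a fully faithful functor is automatically an equivalence onto its essential image, essential surjectivity is all that remains.

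First I would record the content of Proposition \ref{prop:FunctorIsomorphicShift}: on objects $F(X_i)\cong X_i[n]$ for a single integer $n$ independent of $i$. Since $F$ is $k$-linear it is additive, and because it commutes with the shift we obtain $F(X_i[h])\cong X_i[h+n]$ for every $i>0$ and every $h\in\Z$. As $h$ ranges over all of $\Z$ so does $h+n$, so the essential image of $F$ already contains every shifted indecomposable $X_i[m]$ with $i>0$ and $m\in\Z$, which by Definition \ref{defn:Indecomposable} and the discussion following it is the complete list of indecomposables of $\perfa$.

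Next I would upgrade this from indecomposables to arbitrary objects. Since $\perfa$ is a Krull--Schmidt category, any $Y^{\bullet}\in\perfa$ decomposes as a finite direct sum $Y^{\bullet}\cong\bigoplus_{l} X_{i_l}[m_l]$. Each summand satisfies $X_{i_l}[m_l]\cong F\bigl(X_{i_l}[m_l-n]\bigr)$, and additivity of $F$ then yields $Y^{\bullet}\cong F\bigl(\bigoplus_{l} X_{i_l}[m_l-n]\bigr)$, so $Y^{\bullet}$ lies in the essential image. Hence $F$ is essentially surjective, and being fully faithful it is an equivalence.

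This corollary is essentially immediate once Proposition \ref{prop:FunctorIsomorphicShift} is in hand; the only points deserving care are that $F$, being $k$-linear, preserves finite direct sums, and that $\perfa$ is genuinely Krull--Schmidt so that the decomposition into the known indecomposables is available. The classification of the indecomposable objects is precisely what makes the computation of the essential image possible, so the \emph{only} real obstacle was the object-level rigidity already handled in the preceding proposition.
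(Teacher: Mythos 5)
Your proof is correct and follows exactly the route the paper intends: Proposition \ref{prop:FunctorIsomorphicShift} gives essential surjectivity on indecomposables, and the Krull--Schmidt decomposition together with additivity extends this to all objects. The paper simply states this as ``clear''; your write-up fills in the same argument in detail.
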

\begin{proof}
It is clear from Proposition \ref{prop:FunctorIsomorphicShift} that every fully faithful functor $\perfa \longrightarrow \perfa$ is also essentially surjective, hence it is an equivalence.
\end{proof}

With similar arguments, and by including the indecomposable objects $X_\infty$, Proposition \ref{prop:FunctorIsomorphicShift} and Corollary \ref{cor:fullyfaithfulequivalence} can be extended to a fully faithful functor $F:\dera \longrightarrow \dera$.

\begin{oss} \label{oss:ShiftReduction}
Due to Proposition \ref{prop:FunctorIsomorphicShift}, $F(X_i)$ is isomorphic to $X_i[h]$ for a fixed $h \in \Z$. Up to composition with a shift $[-h]$, we can assume that $F$ is isomorphic to the identity functor on the objects.
\end{oss}

We now want to study the action of $F$ on the morphisms between indecomposable elements.

\begin{prop} \label{prop:FunctorOnK2Morphism}
Consider a morphism $(a,b)$ as described in Remark \ref{oss:NonPerfectIndecomposable} from an indecomposable object $X_i$ to itself, that is
$$\centerline{
    \xymatrix{
 0\ar[r]\ar[d] & A\ar[r]\ar[d]^{a} & A\ar[r]\ar[d]^{a} \ar[r]\ar[d] &  \cdots\ar[r]\ar[d] & A\ar[r]\ar[d]^{a} & A\ar[r]\ar[d]^{a+ \epsilon\cdot b} & 0\ar[d] \\
 0\ar[r] & A\ar[r] & A\ar[r] &  \cdots\ar[r] & A\ar[r] & A\ar[r] & 0\\
  }
}$$

with $a$, $b\in k$. The action of the
functor on the morphism $(a,b)$ is given by an invertible matrix:
$$\left(
                                     \begin{array}{cc}
                                       1 & 0 \\
                                       0 & \delta_i \\
                                     \end{array}
                                   \right).
$$
with $\delta_i \in k$.
\end{prop}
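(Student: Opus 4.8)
The plan is to reduce the statement to a description of the $k$-algebra automorphisms of the endomorphism ring $\eend(X_i) = \hom(X_i,X_i)$. By Remark \ref{oss:ShiftReduction} I may assume that $F$ acts as the identity on objects, so $F(X_i) \cong X_i$. Since $F$ is additive and preserves composition, and since full faithfulness makes the induced map $\hom(X_i,X_i) \to \hom(F(X_i),F(X_i))$ bijective, the functor induces a $k$-algebra automorphism $\phi$ of $\eend(X_i)$. The content of the proposition is then precisely the determination of $\phi$ in a convenient basis.

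Next I would describe this ring explicitly. By Proposition \ref{prop:ClassificationMorphisms}, the space $\hom(X_i,X_i)$ is of $k^2$-type: it is two-dimensional with basis the identity $1^i_{i[0]}$ and the morphism $\epsilon^i_{i[0]}$, so a general endomorphism is the pair $(a,b) = a\cdot 1^i_{i[0]} + b\cdot \epsilon^i_{i[0]}$. The composition rule of Section $4$, case (iv), states that the composite of two morphisms of $k_\epsilon$-type vanishes, hence $\epsilon^i_{i[0]} \circ \epsilon^i_{i[0]} = 0$. Therefore $\eend(X_i)$ is, as a $k$-algebra, isomorphic to $k[\epsilon]/(\epsilon^2) \cong A$ itself.

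It then remains to classify the $k$-algebra automorphisms of $A$. Any such $\phi$ fixes the identity, so $\phi(1^i_{i[0]}) = 1^i_{i[0]}$, which forces the first column of the matrix of $\phi$ in the basis $(1^i_{i[0]}, \epsilon^i_{i[0]})$ to be $(1,0)^T$. Writing $\phi(\epsilon^i_{i[0]}) = (p,q)$, multiplicativity gives $\phi(\epsilon)^2 = \phi(\epsilon^2) = 0$, so that $p^2 = 0$; since a field contains no nonzero nilpotent element, $p = 0$. Thus $\phi(\epsilon^i_{i[0]}) = \delta_i\cdot \epsilon^i_{i[0]}$ for some $\delta_i \in k$, which supplies the second column and yields exactly the claimed matrix. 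Finally $\delta_i \neq 0$ because $\phi$ is bijective, so the matrix is invertible.

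The main obstacle is simply the clean identification of $\eend(X_i)$ with $A$; once $\epsilon^i_{i[0]}\circ\epsilon^i_{i[0]} = 0$ is recorded, the shape of the matrix is forced by the absence of nonzero nilpotents in $k$. The only point requiring care is that full faithfulness is used to guarantee that the induced map on $\hom$-spaces is a bijection, and not merely a ring homomorphism, as this is precisely what ensures $\delta_i \neq 0$.
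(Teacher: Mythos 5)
Your proof is correct and follows essentially the same route as the paper: the paper's (very terse) argument is precisely to impose preservation of the identity and of composition on a generic $2\times 2$ matrix, which is what you do, with the useful extra observation that this amounts to classifying $k$-algebra automorphisms of $\eend(X_i)\cong k[\epsilon]/(\epsilon^2)$ and that $p^2=0$ forces $p=0$ in a field. Your explicit use of full faithfulness to get $\delta_i\neq 0$ is a welcome detail that the paper leaves implicit.
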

\begin{proof}
Since $F$ is a functor, it preserves compositions and the identity. By imposing these two conditions to a generic $2\times2$ matrix with coefficients in $k$, the result is straightforward.

\end{proof}

This shows that if $(a,b)$ is a morphism of $k^2$-type from $X_i$ to itself, then $F$ acts only on its second component, which is the one generated by $k_{\epsilon}$. Hence the following definition makes sense.

\begin{defn}
For all $i,j\in \mathbb{N}$ and $\alpha\in \mathbb{Z}$ we define $k^i_{j[\alpha]}\in k$ such that:
\begin{itemize}
  \item if $(a,b)$ is a morphism of $k^2$-type from $X_i$ to $X_i$, then $F(a,b)=(a,k^i_{i[0]} b)$.
  \item if $(a,0)$ is a morphism of $k_1$-type from $X_i$ to $X_j[\alpha]$, then $F(a,0)=(k^i_{j[\alpha]} a,0)$.\item if $(0,b)$ is a morphism of $k_{\epsilon}$-type from $X_i$ to $X_{j}[\alpha]$, then $F(0,b)=(0,k^i_{j[\alpha]} b)$.
	\end{itemize}
	\end{defn}
	
	Note that the element $k^i_{i[0]}$ corresponds to $\delta_i$ in Proposition \ref{prop:FunctorOnK2Morphism}. The functor $F$ is fully faithful, hence all the coefficients $k^i_{j[\alpha]}$ are non zero.

\begin{prop} \label{prop:CoefficientNotDependingOnLength}
$k^i_{i[0]}$ does not depend on $i\in \mathbb{N} \smallsetminus \{0\}$.
\end{prop}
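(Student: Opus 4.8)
The plan is to exploit functoriality in the most direct way. Since $F$ is $k$-linear, commutes with shifts, and by Remark \ref{oss:ShiftReduction} may be assumed to fix each indecomposable $X_i$ on the nose, it multiplies every basis morphism $1^i_{j[\alpha]}$ and $\epsilon^i_{j[\alpha]}$ by the scalar $k^i_{j[\alpha]}$. Consequently, applying $F$ to a composite of such generators turns each entry of the composition table of Section $4$ into a multiplicative relation among these scalars. The key idea is to realize both $\epsilon^i_{i[0]}$ and $\epsilon^1_{1[0]}$ as composites built from the \emph{same} pair of morphisms linking $X_i$ and $X_1$, so that $k^i_{i[0]}$ and $k^1_{1[0]}$ are read off as the same product of scalars.

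Fix $i>1$ (the case $i=1$ being vacuous). First I would check, directly from Proposition \ref{prop:ClassificationMorphisms}, that $\hom(X_i,X_1[i-1])$ is generated by a morphism $1^i_{1[i-1]}$ of $k_1$-type and that $\hom(X_1,X_i[1-i])$ is generated by a morphism $\epsilon^1_{i[1-i]}$ of $k_\epsilon$-type: the inequalities $\max\{0,i-1\}\le i-1<i$ and $-i<1-i\le\min\{0,1-i\}$ both hold, and for $i\ge 2$ they avoid the exceptional $(0,0)$ case. Because $F$ commutes with shifts, the scalar attached to a morphism is unchanged upon shifting, so the same scalars $k^i_{1[i-1]}$ and $k^1_{i[1-i]}$ govern the shifted copies $1^i_{1[i-1]}\colon X_i\to X_1[i-1]$ and $\epsilon^{1[i-1]}_{i[0]}\colon X_1[i-1]\to X_i$.

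Next I would compose in both orders. By composition rule (iii) of Section $4$ one has $\epsilon^{1[i-1]}_{i[0]}\circ 1^i_{1[i-1]}=\epsilon^i_{i[0]}$, since the required condition $-i<0\le\min\{0,i-i\}$ is met; applying $F$ and pulling out the scalars gives
$$k^i_{i[0]}=k^i_{1[i-1]}\,k^1_{i[1-i]}.$$
Composing in the other order, rule (ii) yields $1^{i[1-i]}_{1[0]}\circ\epsilon^1_{i[1-i]}=\epsilon^1_{1[0]}$ (the condition $-1<0\le\min\{0,0\}$ holds), whence
$$k^1_{1[0]}=k^i_{1[i-1]}\,k^1_{i[1-i]}.$$
The two right-hand sides are the same product (and coincide in any case since $k$ is commutative), so $k^i_{i[0]}=k^1_{1[0]}$ for every $i$, which is the claim.

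The only genuine work — and the place where an error could slip in — is the bookkeeping in the previous two steps: one must keep the shift conventions consistent, so that a generator and its shift carry the same $F$-scalar (this is exactly where $F$ commuting with shifts is used), and one must verify that each composite truly lands in the $k_\epsilon$-case of the table rather than being forced to zero. Both checks reduce to the explicit inequalities above, so nothing beyond Proposition \ref{prop:ClassificationMorphisms} and the composition rules of Section $4$ is required.
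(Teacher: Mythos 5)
Your proof is correct and is essentially the paper's own argument: the paper also realizes $\epsilon^i_{i[0]}$ and $\epsilon^1_{1[0]}$ as the two compositions of a single pair of generators linking $X_i$ and $X_1$, and reads off $k^i_{i[0]}=k^1_{1[0]}$ from the two resulting scalar identities. The only (immaterial) difference is the choice of connecting pair: the paper uses the unshifted generators $\epsilon^i_{1[0]}\colon X_i\to X_1$ and $1^1_{i[0]}\colon X_1\to X_i$, while you use the shifted pair $1^i_{1[i-1]}$ and $\epsilon^1_{i[1-i]}$, at the cost of a little extra shift bookkeeping that you carry out correctly.
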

\begin{proof}
We prove that $k^{i}_{i[0]}=k^{1}_{1[0]}$ for $i>1$. Consider the following morphisms from $X_i$ to $X_1$ and from $X_1$ to $X_i$:
$$
\centerline{
    \xymatrix{
  X_i\ar[d]^{\epsilon^i_{1[0]}}:& 0\ar[r] & A\ar[r] & \cdots\ar[r] & A\ar[r]\ar[d]^\epsilon & 0\\
  X_1\ar[d]^{1^1_{i[0]}}: & 0\ar[r] & 0\ar[r] & 0\ar[r] & A\ar[r]\ar[d]^1 & 0\\
  X_i:& 0\ar[r] & A\ar[r] & \cdots\ar[r] & A\ar[r] & 0.\\
  }
}
$$

The functor $F$ sends $\epsilon^i_{1[0]}$ to $k^{i}_{1[0]}\epsilon^i_{1[0]}$ and $1^1_{i[0]}$ to $k^1_{i[0]}1^1_{i[0]}$; moreover, the composition $1^1_{i[0]} \circ \epsilon^i_{1[0]}=\epsilon^i_{i[0]}$ is a morphism between $X_i$ and $X_i$ and then it is sent by $F$ to $k^i_{i[0]}\epsilon^i_{i[0]}$.
As $F$ preserves compositions:
$$F(1^1_{i[0]} \circ \epsilon^i_{1[0]})=F(1^1_{i[0]})\circ F( \epsilon^i_{1[0]}),$$
which means $k^i_{i[0]}\epsilon^i_{i[0]} = k^1_{i[0]}k^{i}_{1[0]}\epsilon^i_{i[0]}$. It
follows $k^i_{i[0]} = k^1_{i[0]}k^{i}_{1[0]}$.
By composing these morphisms in the inverse order we get $\epsilon^i_{1[0]} \circ 1^1_{i[0]}=\epsilon^1_{1[0]}$, a morphism between $X_1$ and $X_1$. It is sent by $F$ to $(0,k^1_{1[0]}\epsilon^1_{1[0]})$.
Again, $F$ preserves compositions, hence
$k^1_{1[0]}=k^1_{i[0]}k^i_{1[0]}$, that is
$k^i_{i[0]}=k^1_{1[0]}$.
\end{proof}

\begin{prop} \label{prop:FunctorReduction}
Up to composing with a shift and a push forward along an automorphism of $\spec(A)$, the functor $F$ is isomorphic to a functor which is the identity on the objects and has coefficients $k^i_{i[0]}$ equal to $1$.
\end{prop}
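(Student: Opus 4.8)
The plan is to realize the normalization in two independent steps, matching the two operations allowed in the statement. First, by Remark \ref{oss:ShiftReduction} I may compose $F$ with the shift $[-h]$ and assume from the outset that $F$ is isomorphic to the identity on objects, so that $F(X_i)\cong X_i$ for every $i$. By Proposition \ref{prop:CoefficientNotDependingOnLength} all the scalars $k^i_{i[0]}$ coincide with a single value $c:=k^1_{1[0]}\in k$, which is nonzero because $F$ is fully faithful. The whole content left to prove is then that one can cancel this common factor $c$ by composing with the pushforward along a suitable automorphism of $\spec(A)$.

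The key is to describe $\operatorname{Aut}(\spec A)$ explicitly. A $k$-algebra automorphism of $A=k[\epsilon]/(\epsilon^2)$ must preserve the maximal ideal $(\epsilon)$ and send the nilpotent generator $\epsilon$ to another nilpotent generator; hence it is of the form $\phi_\lambda\colon \epsilon\mapsto \lambda\epsilon$ for a unique $\lambda\in k^*$, and $\operatorname{Aut}(\spec A)\cong k^*$. For each such $\lambda$ the pushforward $\phi_{\lambda,*}$ is restriction of scalars along $\phi_\lambda$; it is an exact $k$-linear autoequivalence of $\perfa$. I would first check that it does not move objects and introduces no shift: on $X_i$ it replaces each differential (multiplication by $\epsilon$) by multiplication by $\lambda^{-1}\epsilon$ after identifying $\phi_{\lambda,*}A$ with $A$, and since $\lambda^{-1}$ is a unit this complex is isomorphic to $X_i$. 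Thus $\phi_{\lambda,*}$ is again isomorphic to the identity on objects, and composing it with $F$ keeps us inside the class of fully faithful functors that are the identity on objects, so the coefficients $k^i_{i[0]}$ of the composite remain well defined.

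The main computation, and the step I expect to be the actual obstacle, is to pin down how $\phi_{\lambda,*}$ acts on the generators of the morphism spaces. Transporting a morphism through the module isomorphism $\phi_{\lambda,*}A\cong A$, a component given by multiplication by a unit (the building block of the $k_1$-type generators) is carried to itself, while a component given by multiplication by $\epsilon$ (the building block of the $k_\epsilon$-type generators and of the $\epsilon$-part of the $k^2$-type endomorphisms) is carried to multiplication by $\lambda^{-1}\epsilon$. Consequently $\phi_{\lambda,*}$ fixes every $k_1$-coefficient and multiplies every $\epsilon$-coefficient by $\lambda^{-1}$; in the notation of Proposition \ref{prop:FunctorOnK2Morphism} it has $k^i_{i[0]}=\lambda^{-1}$ for all $i$.

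Finally I would assemble the two steps. Since both functors are $k$-linear, the effect on the $\epsilon$-self-map of $X_i$ is multiplicative: the composite $\phi_{\lambda,*}\circ F$ sends it to $c\lambda^{-1}$ times itself, i.e. has $k^i_{i[0]}=c\lambda^{-1}$. Choosing $\lambda:=c=k^1_{1[0]}\in k^*$, which is legitimate precisely because the $k^i_{i[0]}$ were all equal so that a single automorphism normalizes them simultaneously, yields a functor that is isomorphic to the identity on objects and has all coefficients $k^i_{i[0]}$ equal to $1$, as required.
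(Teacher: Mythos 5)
Your proposal is correct and follows essentially the same route as the paper: reduce to the identity on objects via Remark \ref{oss:ShiftReduction}, invoke Proposition \ref{prop:CoefficientNotDependingOnLength} to get a single constant $\mu=k^i_{i[0]}$, and cancel it by composing with the pushforward along the automorphism $\epsilon\mapsto\mu\epsilon$ of $A$. The only difference is that you spell out the restriction-of-scalars computation showing the pushforward fixes the $k_1$-coefficients and scales the $\epsilon$-coefficients by $\mu^{-1}$, which the paper simply asserts.
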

\begin{proof}
Assume, as in Remark \ref{oss:ShiftReduction}, that $F$ is isomorphic to the identity on the objects. Moreover, it acts as the multiplication by $\mu := k^i_{i[0]}$ on the morphisms of $k^2$-type, which is constant by Proposition \ref{prop:CoefficientNotDependingOnLength}.
Now consider the map $\phi_{\mu}:\xymatrix{A\ar[r] & A}$ defined as follow:
$$a+\epsilon b\longmapsto a + \epsilon \mu b.$$
The induced push forward functor $(\phi_{\mu})_*$ on $\perfa$ is isomorphic to the identity on the objects and it acts as multiplication by
$\mu^{-1}$ on morphism of $k^2$-type. Up to isomorphisms of functors, the composition $(\phi_{\mu})_*\circ F$ is the identity on the objects and acts as the identity on morphisms of $k^2$-type.
\end{proof}

From now on, in view of Proposition \ref{prop:FunctorReduction}, we can assume that the functor $F$ satisfies the following condition:

\begin{enumerate}
\item[(C1)] $F$ is the identity on the objects of $\perfa$ and the coefficients $k^i_{i[0]}$ of $F$ are equal to $1$.
\end{enumerate}

\begin{lemma}
Let $k^i_{j[\alpha]}$ be the coefficient of a functor $F$ satisfying (C1). The following relations hold:

\begin{tabular}{lll}
	(R1)&  $k^j_{i[\alpha]}k^i_{j[-\alpha]}=1$                          & if $-i < \alpha \leq \min\{0,j-i\}$ or $\max\{0,j-i\} \leq \alpha < j$.\\
	(R2)&  $k^j_{i[\alpha]}=k^j_{i-1[\alpha]}k^{i-1}_{i[0]}$            & $0\leq \alpha< j \leq i$, $(i-j,\alpha) \neq (0,0), (1,0)$. \\
	(R3)&  $k^j_{i[\alpha]}=k^j_{i-1[0]}k^{i-1}_{i[\alpha]}$            & $j < i-1$ and $-i < \alpha \leq j-i$. \\
	(R4)&  $k^{i-1}_{i[\alpha]}=k^{i-1}_{i-1[\alpha]}k^{i-1}_{i[0]}$    & $1-i<\alpha<0$. \\
	(R5)&  $k^{i-1}_{i[2-i]}=k^{i-1}_{i-1[1]}k^{i-1}_{i[1-i]}$          & $i>2$. \\
\end{tabular}
\end{lemma}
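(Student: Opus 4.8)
The plan is to use the one structural property of $F$ not yet exploited, namely that it is multiplicative on compositions, together with the composition table of Section 4 and the normalization (C1). Each of (R1)--(R5) simply records the effect of $F$ on a factorization $h = g\circ f$ in which $f$, $g$ and $h$ are all \emph{generators} of $\hom$-spaces between (possibly shifted) indecomposables. Since $F$ commutes with shifts, the scalar attached to a shifted generator $f[m]$ is the same coefficient $k^i_{j[\alpha]}$ attached to $f$; and since $F(g\circ f)=F(g)\circ F(f)$, reading off coefficients in $h=g\circ f$ turns the equation into the stated product relation. Crucially, whenever one of the three generators is a self-map of some $X_\ell$, its coefficient is $1$ by (C1) (recall $k^\ell_{\ell[0]}=1$), and this is exactly what produces the ``$=1$'' in (R1) and the telescoping factors in (R2)--(R5).

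For (R1) I would take a generator $f\colon X_i\to X_j[-\alpha]$ with coefficient $k^i_{j[-\alpha]}$ and the shift $g\colon X_j[-\alpha]\to X_i$ of the generator $X_j\to X_i[\alpha]$ with coefficient $k^j_{i[\alpha]}$, so that $g\circ f$ is an endomorphism of $X_i$. In the range $-i<\alpha\le\min\{0,j-i\}$ one has $f=1^i_{j[-\alpha]}$ and $g$ of $k_\epsilon$-type, while in the range $\max\{0,j-i\}\le\alpha<j$ the roles of $k_1$ and $k_\epsilon$ are exchanged; by composition rules (ii)/(iii) the composite is the nonzero self-map $\epsilon^i_{i[0]}$ in both cases. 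Applying $F$ and comparing coefficients gives $k^j_{i[\alpha]}\,k^i_{j[-\alpha]}=k^i_{i[0]}=1$.

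The remaining relations follow the identical recipe, only the chosen factorization changes. For (R2) I would use rule (i), factoring $1^j_{i[\alpha]}=\bigl(1^{i-1}_{i[0]}[\alpha]\bigr)\circ 1^j_{i-1[\alpha]}$ as a composite of two $k_1$-type maps, whence $k^j_{i[\alpha]}=k^j_{i-1[\alpha]}k^{i-1}_{i[0]}$; the excluded pairs $(i-j,\alpha)=(0,0),(1,0)$ are precisely those for which one factor degenerates to a $k^2$-type self-map. For (R3) I would factor $\epsilon^j_{i[\alpha]}=\epsilon^{i-1}_{i[\alpha]}\circ 1^j_{i-1[0]}$ (rule (iii), $k_\epsilon\circ k_1=k_\epsilon$). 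For (R4) I would factor $\epsilon^{i-1}_{i[\alpha]}=\bigl(1^{i-1}_{i[0]}[\alpha]\bigr)\circ\epsilon^{i-1}_{i-1[\alpha]}$ (rule (ii), $k_1\circ k_\epsilon=k_\epsilon$). For (R5) I would factor $\epsilon^{i-1}_{i[2-i]}=\bigl(\epsilon^{i-1}_{i[1-i]}[1]\bigr)\circ 1^{i-1}_{i-1[1]}$ (rule (iii) again). In each case multiplicativity of $F$ and (C1) yield the relation at once.

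The only genuine labour---and the step most prone to error---is the bookkeeping of the numerical ranges. For each factorization I must check, against Proposition \ref{prop:ClassificationMorphisms}, that both factors actually lie in the declared $k_1$- or $k_\epsilon$-range (so their coefficients are defined), and, against the composition rules, that the composite falls in the range where rule (i)/(ii)/(iii) returns the named nonzero generator rather than $0$. These checks are exactly what the hypotheses on $\alpha$, $i$, $j$ encode: for instance in (R5) the constraint $i>2$ is what forces $2-i\le-1$, so that $\hom(X_{i-1},X_i[2-i])$ is still of $k_\epsilon$-type and the composite is the nonzero generator $\epsilon^{i-1}_{i[2-i]}$; similarly the constraints in (R3) guarantee $\alpha\le j-i<0$, keeping both the middle map and the composite of $k_\epsilon$-type. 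Once the ranges are confirmed the five relations are immediate, so I expect no conceptual difficulty beyond this case analysis.
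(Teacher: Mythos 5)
Your proposal is correct and follows essentially the same route as the paper: each relation is obtained by factoring a nonzero generator as a composite of two generators (using the composition rules of Section 4) and applying multiplicativity of $F$ together with $k^\ell_{\ell[0]}=1$ from (C1); indeed your chosen factorizations for (R2)--(R5) are exactly the ones in the paper. The only cosmetic difference is in (R1), where you compose to an endomorphism of $X_i$ while the paper lands on $X_j$ (and treats the degenerate case $i=j$, $\alpha=0$ separately as trivial), but both yield the same identity.
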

\begin{proof}
(R1) For $i=j$ and $\alpha=0$ the statement is trivial. In the other cases note that, when the first inequality holds, $k^j_{i[\alpha]}$ is related to a morphism of $k_\epsilon$-type, $k^i_{j[-\alpha]}$ is related to a morphism of $k_1$-type and the composition is a non zero morphism of $k_\epsilon$-type between $X_j$ and $X_j$. When the second inequality holds, the types are swapped and the composition is still non zero. So we have:
$$k^j_{i[\alpha]} k^i_{j[-\alpha]}=k^j_{j[0]}=1.$$

(R2) The morphisms from $X_j$ to $X_{i}[\alpha]$, from $X_j$ to $X_{i-1}[\alpha]$ and from $X_{i-1}$ to $X_i[0]$ are of $k_1$-type, hence case (i) of Section 4 implies that the composition:
 $$1^j_{i[\alpha]}= 1^{i-1}_{i[0]} \circ 1^j_{i-1[\alpha]} $$
 is non zero.
 \\

 (R3) The morphism from $X_{j}$ to $X_{i-1}[0]$ is of $k_1$-type, the morphisms from $X_j$ to $X_i[\alpha]$ and the morphism from $X_{i-1}$ to $X_{i}[\alpha]$ are both of $k_\epsilon$-type, hence case C of Section 4 implies that the composition:
 	$$\epsilon^j_{i[\alpha]}=\epsilon^{i-1}_{i[\alpha]} \circ 1^j_{i-1[0]}$$
 	 is non zero.
 \\

 (R4) The morphism from $X_{i-1}$ to $X_{i}[0]$ is of $k_1$-type, the morphisms from $X_{i-1}$ to $X_{i}[\alpha]$ and the morphism from $X_{i-1}$ to $X_{i-1}[\alpha]$ are both of $k_\epsilon$-type, hence case (ii) of Section 4 implies that the composition:
 $$\epsilon^{i-1}_{i[\alpha]}=1^{i-1}_{i[0]}\circ \epsilon^{i-1}_{i-1[\alpha]}$$
  is non zero.
 \\

 (R5) The morphism from $X_{i-1}$ to $X_{i-1}[1]$ is of $k_1$-type, the morphisms from $X_{i-1}$ to $X_{i}[1-i]$ and the morphism from $X_{i-1}$ to $X_{i}[2-i]$ are both of $k_\epsilon$-type, hence case (iii) of Section 4 implies that the composition:
 $$\epsilon^{i-1}_{i[1-i]} \circ 1^{i-1}_{i-1[1]} = \epsilon^{i-1}_{i[2-i]}$$
  is non zero.

 \end{proof}

Given a set of objects $\E \subset \ob(\perfa)$ we denote by $\add{\E}$ the smallest full subcategory of $\perfa$ containing $\E$ and closed under shifts, finite direct sums and direct summand.

\begin{lemma} \label{lemma:NaturalIsomorphism}
Let $F$ be a functor satisfying (C1). The functor $F$ is isomorphic to a functor $F'$ such that the coefficients  $k'^{i-1}_{i[0]}$ of $F'$, are equal to $1$ for all $i>1$.
\end{lemma}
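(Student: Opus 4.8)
The plan is to realize $F'$ as a \emph{conjugate} of $F$ by scalar automorphisms of the indecomposable objects, and then to choose these scalars recursively so that the coefficients ${k'}^{i-1}_{i[0]}$ become $1$. For every $i>0$ fix a scalar $c_i\in k^{\times}$ and set $\eta_{X_i}:=c_i\cdot\id_{X_i}\in\eend(X_i)$; extend $\eta$ to all of $\perfa$ by shifting ($\eta_{X_i[h]}:=c_i\cdot\id_{X_i[h]}$) and by taking block-scalar automorphisms on finite direct sums. Since each $\eta_X$ is invertible, the rule $F'(f):=\eta_Y\circ F(f)\circ\eta_X^{-1}$ for $f\colon X\to Y$ defines a functor: indeed $F'(\id)=\id$ and $F'(g)\circ F'(f)=\eta_Z F(g)\eta_Y^{-1}\eta_Y F(f)\eta_X^{-1}=\eta_Z F(g\circ f)\eta_X^{-1}=F'(g\circ f)$, and by construction $\eta$ is a natural isomorphism $F\cong F'$. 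As $F$ is the identity on objects, so is $F'$.

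Next I would compute the coefficients of $F'$. Because a shift of a scalar endomorphism is again a scalar and scalars commute past every morphism, for a generator $g$ of a one-dimensional space $\hom(X_i,X_j[\alpha])$ one has
$$F'(g)=\eta_{X_j[\alpha]}\circ F(g)\circ\eta_{X_i}^{-1}=c_j\,c_i^{-1}\,F(g)=\frac{c_j}{c_i}\,k^i_{j[\alpha]}\,g,$$
so that ${k'}^i_{j[\alpha]}=(c_j/c_i)\,k^i_{j[\alpha]}$ for every admissible triple $(i,j,\alpha)$; on the $k^2$-type space $\hom(X_i,X_i)$ the same computation gives ${k'}^i_{i[0]}=k^i_{i[0]}=1$, so $F'$ still satisfies (C1). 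In particular, specializing to the generator $1^{i-1}_{i[0]}$ from $X_{i-1}$ to $X_i[0]$ yields
$${k'}^{i-1}_{i[0]}=\frac{c_i}{c_{i-1}}\,k^{i-1}_{i[0]}.$$

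Finally I would solve the recursion. Since $F$ is fully faithful, every coefficient is nonzero, so $k^{i-1}_{i[0]}\in k^{\times}$ for all $i>1$. Setting $c_1:=1$ and $c_i:=c_{i-1}\,(k^{i-1}_{i[0]})^{-1}$ for $i>1$ forces ${k'}^{i-1}_{i[0]}=1$ for every $i>1$, which is the assertion; and, as noted, the equalities ${k'}^i_{i[0]}=1$ show that $F'$ again satisfies (C1).

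The computation above is deliberately arranged to avoid any real obstacle: the only point that needs care is that conjugation by \emph{scalar} automorphisms multiplies each generator by the clean ratio $c_j/c_i$ with no cross-terms. Had I allowed $\eta_{X_i}$ to carry a nonzero $k_\epsilon$-component, the expansion would pick up compositions of $k_1$- and $k_\epsilon$-type morphisms (cases (ii) and (iii) of Section $4$); one checks that the relevant cross-compositions from $X_{i-1}$ to $X_i[0]$ land in a $k_\epsilon$-type slot forbidden by the inequalities of Proposition \ref{prop:ClassificationMorphisms} and hence vanish. Restricting to scalars makes this verification moot and keeps (C1) manifestly intact, so the whole argument reduces to the one-line recursion above.
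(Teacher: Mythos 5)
Your proof is correct and takes essentially the same route as the paper: both realize the isomorphism $F\cong F'$ via scalar automorphisms $c_i\cdot\id_{X_i}$ with $c_i$ the telescoping product of the $(k^{h-1}_{h[0]})^{-1}$, so that ${k'}^{i-1}_{i[0]}=(c_i/c_{i-1})\,k^{i-1}_{i[0]}=1$. Your write-up is merely more explicit about why conjugation by scalars defines a functor, preserves (C1), and produces no $k_\epsilon$ cross-terms; the underlying idea is identical to the paper's commutative-diagram argument.
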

\begin{proof}
The isomorphism of functors between $F$ and $F'$ is given by the coefficients:
$$\phi_1=1 \quad \phi_i=\prod_{h=1}^{i-1}{(k^{h-1}_{h[0]})^{-1}} :\xymatrix{X_i \ar[r] & X_i}$$

The following diagram is commutative, and shows that $k'^{i-1}_{i[0]}=1$ concluding the proof of the lemma.

$$\xymatrix@=50pt{
 X_{i-1}\ar[r]^{\prod_{h=1}^{i-2}{(k^{h-1}_{h[0]})^{-1}}}\ar[d]_{f k^{i-1}_{i[0]}} & X_{i-1}\ar[d]^{f \cdot k'^{i-1}_{i[0]}}\\
 X_i\ar[r]^{\prod_{h=1}^{i-1}{(k^{h-1}_{h[0]})^{-1}}} & X_i.
 }$$\\
\end{proof}

Now we can choose a functor $F$ satisfying (C1) and, by Lemma \ref{lemma:NaturalIsomorphism}, the condition 

\begin{enumerate}
\item[(C2)] The coefficients $k^{i-1}_{i[0]}$ of $F$ are equal to $1$ for all $i>1$.
\end{enumerate}

\begin{teor} \label{teor:BehaviourFunctor}
Let $F$ be a functor satisfying (C1) and (C2).
The action of $F$ on the morphisms is completely determined by its coefficient $k^2_{1[1]}=\lambda$. In particular:
\begin{equation} \label{eqn:CoefficientFunctor}
k^i_{j[\alpha]}=\lambda^\alpha
\end{equation}
for $-j < \alpha \leq \min\{0,i-j\}$ or $\max\{0,i-j\} \leq \alpha < i$.
\end{teor}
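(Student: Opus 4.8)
The plan is to show that every coefficient $k^i_{j[\alpha]}$ is forced to equal $\lambda^\alpha$ using only the composition table of Section 4, the relations (R1)--(R5), and the normalizations (C1)--(C2). The strategy has two movements: first reduce an arbitrary coefficient to a \emph{diagonal} one $k^i_{i[\alpha]}$, and then evaluate the diagonal coefficients.

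The first reduction I would make is between the two types. Since (R1) reads $k^j_{i[\alpha]}k^i_{j[-\alpha]}=1$ and pairs a $k_\epsilon$-type coefficient (nonpositive shift) with the $k_1$-type coefficient obtained by exchanging source and target and negating the shift, it suffices to prove $k^i_{j[\alpha]}=\lambda^\alpha$ on $k_1$-type coefficients, i.e. for $\max\{0,i-j\}\le\alpha<i$; the $k_\epsilon$-type values are then their inverses, and $(\lambda^{-\alpha})^{-1}=\lambda^\alpha$ produces the right power. For the base case $\alpha=0$ a $k_1$-type coefficient has source index $i$ strictly less than its target index $j$, and repeatedly applying (R2) (which, once (C2) annihilates its $k^{i-1}_{i[0]}$-type factor, simply lowers the larger index by one at fixed shift) brings the target down to $j=i+1$; there $k^i_{i+1[0]}=1$ is exactly (C2). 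This gives $k^i_{j[0]}=1=\lambda^0$.

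For $\alpha\ge1$ I would reduce to the diagonal. When the source index does not exceed the target index, (R2) together with (C2) collapses the target directly to the diagonal value $k^i_{i[\alpha]}$, the excluded pairs $(0,0),(1,0)$ never occurring because $\alpha\ge1$. When the source index exceeds the target index, I pass through (R1) to the corresponding $k_\epsilon$-type coefficient, contract its source gap by (R3) (whose extra factor is a $k_1$-type coefficient of shift $0$, hence $1$) and its target gap by (R4) (again using (C2)); the single extreme shift $\alpha=1-i$ that (R4) cannot reach is handled by (R5), which relates it to $k^{i-1}_{i[2-i]}$ with precisely the factor $k^{i-1}_{i-1[1]}=\lambda$ needed to yield $\lambda^{1-i}$. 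It then remains only to evaluate the diagonal. Composing the shift-raising morphism $X_i\to X_i[1]$ with itself and invoking case (i) of Section 4 gives the multiplicativity $k^i_{i[\alpha]}=(k^i_{i[1]})^{\alpha}$, so I must show $k^i_{i[1]}=\lambda$ for every $i\ge2$. I prove this by descending induction: the instance of (R2) with coinciding indices gives $k^i_{i[1]}=k^i_{i-1[1]}$, and chaining (R1), (R4), (R1) identifies $k^i_{i-1[1]}$ with $k^{i-1}_{i-1[1]}$, the induction bottoming out at $k^2_{2[1]}=k^2_{1[1]}=\lambda$. Assembling these reductions yields $k^i_{j[\alpha]}=\lambda^\alpha$ in all cases.

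The main obstacle is not conceptual but the bookkeeping of admissible ranges: each of (R2)--(R5) holds only on a restricted set of triples, constrained by the excluded pairs in (R2) and by the strict inequalities that define the $k_1$- and $k_\epsilon$-ranges, so the delicate point is to verify that any coefficient can be steered to the diagonal without ever leaving the admissible region, and in particular that the boundary shift $\alpha=1-i$ is reached only through (R5), which is the one relation calibrated to deliver exactly the factor $\lambda$ that makes the final power $\lambda^{1-i}$ come out correctly.
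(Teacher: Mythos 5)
Your argument is correct: I checked the admissibility ranges for each application of (R1)--(R5) that you invoke, including the delicate boundary cases (the excluded pairs in (R2) for $\alpha=0$, the failure of (R3) when the source--target gap is already $1$, the shift $\alpha=1-i$ that only (R5) reaches, and the bottoming-out of your descending chain at $k^2_{2[1]}=k^2_{1[1]}k^1_{2[0]}=\lambda$, where (R4) is no longer available), and they all work out; the only case you leave tacit is the $k^2$-type coefficient $k^i_{i[0]}=1=\lambda^0$, which is (C1). Your route is, however, organized differently from the paper's. The paper runs a single induction on the number of generators of the subcategory $\add{X_1,\ldots,X_i}$, with base case $\add{X_1,X_2}$, and at each step sweeps up the new coefficients involving $X_i$ in four sub-cases via (R1)--(R5) and the inductive hypothesis; in particular the diagonal values $k^i_{i[\alpha]}$ for $0<\alpha<i$ are obtained there from $k^i_{i-1[\alpha]}$ by one application of (R2) and are never isolated as a separate object. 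You instead decouple the problem into ``steer every coefficient to the diagonal'' and ``evaluate the diagonal'', and for the latter you introduce an ingredient absent from the paper: the factorization of $1^i_{i[\alpha]}$ as a composite of $\alpha$ shift-by-one morphisms (case (i) of Section 4), which yields the multiplicativity $k^i_{i[\alpha]}=(k^i_{i[1]})^{\alpha}$, followed by the chain (R2), (R1), (R4), (R1) identifying $k^i_{i[1]}$ with $k^{i-1}_{i-1[1]}$. What your organization buys is a cleaner conceptual statement --- the functor is governed by a single multiplicative character $\alpha\mapsto\lambda^\alpha$ in the shift, and the relations merely transport it --- at the price of verifying admissibility in several separate regimes; what the paper's induction buys is that every appeal to a previously computed coefficient is licensed uniformly by the hypothesis on $\add{X_1,\ldots,X_{i-1}}$. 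Both proofs rest on the same composition table and normalizations, so the difference is one of bookkeeping rather than of substance.
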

\begin{proof}
We proceed by induction on the number of the indecomposable objects generating the subcategory $\add{X_1, \ldots, X_i}$.

On the subcategory $\add{X_1, X_2}$ we have:
$$k^1_{2[0]}k^2_{1[0]}\stackrel{(R1)}{=}1 \text{ and } k^2_{1[1]}k^1_{2[-1]}\stackrel{(R1)}{=}1,$$
$$k^2_{2[1]}\stackrel{(R2)}{=}k^2_{1[1]} k^1_{2[0]}\stackrel{(C2)}{=}\lambda,$$
$$k^2_{2[-1]}\stackrel{(R1)}{=}(k^2_{2[1]})^{-1}=\lambda^{-1}.$$
Note that, by Proposition \ref{prop:ClassificationMorphisms}, these equalities determine the behaviour of $F$ on all the coefficients and prove (\ref{eqn:CoefficientFunctor}) for the subcategory $\add{X_1, X_2}$. \\
Now assume that (\ref{eqn:CoefficientFunctor}) holds true for the subcategory $\add{X_1, \ldots, X_{i-1}}$ and prove it for the subcategory $\add{X_1, \ldots, X_{i}}$.
By assumption $k^i_{i[0]}=1$. By the description of the morphism of Proposition \ref{prop:ClassificationMorphisms}, it is clear that the following steps cover all the remaining coefficients of the functor on $\add{X_1, \ldots, X_{i}}$.

\begin{enumerate}
	\item[(i)] $k^j_{i[0]}$ for all $j<i$ (deducing the case of $k^i_{j[0]}$ by (R1)).
	\item[(ii)] $k^j_{i[\alpha]}$ for all $0<\alpha<j$, $j < i$ (deducing the case of $k^i_{j[-\alpha]}$ by (R1)).
	\item[(iii)] $k^j_{i[\alpha]}$ for all $-i<\alpha \leq j-i$, $j<i$ (deducing the case $k^i_{j[-\alpha]}$ by (R1)).
	\item[(iv)] $k^i_{i[\alpha]}$ for all $0<\alpha<i$ (deducing the case $k^i_{i[-\alpha]}$ by (R1)).
\end{enumerate}

As for the proof:

\begin{enumerate}
	\item[(i)] If $j=i-1$ one obtains $k^{i-1}_{i[0]}=1$ by (C2). \\
	For $j< i-1$, by induction $k^j_{i-1[0]}=1$. We have:
	
	$$k^j_{i[0]}\stackrel{(R2)}{=}k^j_{i-1[0]}k^{i-1}_{i[0]}\stackrel{(C1)}{=}1.$$
	
	\item[(ii)] By induction $k^j_{i-1[\alpha]}=\lambda^{\alpha}$. The claim is true because
	$$k^j_{i[\alpha]}\stackrel{(R2)}{=}k^j_{i-1[\alpha]} k^{i-1}_{i[0]}=\lambda^{\alpha}.$$
	
	\item[(iii)] If $j \neq i-1$, by induction $k^j_{i-1[0]}=1$, then:
	$$k^j_{i[\alpha]}\stackrel{(R3)}{=}k^j_{i-1[0]} k^{i-1}_{i[\alpha]}=k^{i-1}_{i[\alpha]}.$$
	Therefore we have to prove the claim only for $k^{i-1}_{i[\alpha]}$.
	In this case $1-i \leq \alpha < 0$.
	\begin{itemize}
	\item If $1-i < \alpha < 0$, by induction $k^{i-1}_{i-1[\alpha]}=\lambda^\alpha$, then:
	$$k^{i-1}_{i[\alpha]}\stackrel{(R4)}{=}k^{i-1}_{i-1[\alpha]} k^{i-1}_{i[0]}\stackrel{(C2)}{=}\lambda^{\alpha}$$
	\item If $\alpha = 1-i$, it is sufficient to note that $k^{i-1}_{i[\alpha+1]}=k^{i-1}_{i[2-i]}$ belongs to the previous case and by induction $k^{i-1}_{i-1[1]}=\lambda$. Hence:
	$$k^{i-1}_{i[1-i]} \stackrel{(R5)}{=} k^{i-1}_{i[2-i]} (k^{i-1}_{i-1[1]})^{-1}=\lambda^{\alpha+1} \lambda^{-1}=\lambda^{\alpha}.$$
	\end{itemize}
	
	\item[(iv)] We have:
	$$k^i_{i[\alpha]}\stackrel{(R2)}{=}k^i_{i-1[\alpha]} k^{i-1}_{i[0]}=k^{i}_{i-1[\alpha]}\stackrel{(iii)}{=}\lambda^{\alpha}.$$
\end{enumerate}

So the claim is true.
\end{proof}

\begin{cor} \label{cor:ExactToIdentity}
Let $F$ be a functor satisfying (C1) and (C2).
If $F$ is exact, then it is isomorphic to the identity functor.
\end{cor}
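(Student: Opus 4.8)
The plan is to use Theorem~\ref{teor:BehaviourFunctor} to reduce the statement to showing that the single coefficient $\lambda=k^2_{1[1]}$ equals $1$. Indeed, once $F$ satisfies (C1) and (C2), Theorem~\ref{teor:BehaviourFunctor} says that $F$ is the identity on objects and multiplies each generating morphism $1^i_{j[\alpha]}$ or $\epsilon^i_{j[\alpha]}$ by $k^i_{j[\alpha]}=\lambda^\alpha$. Hence $\lambda=1$ would force $F$ to act as the identity on all generators, and therefore, by additivity together with the Krull--Schmidt decomposition of objects of $\perfa$, to coincide with (and in particular be isomorphic to) the identity functor. So the whole problem is to extract the equation $\lambda=1$ from the exactness hypothesis.

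To make $\lambda$ visible I would first exhibit a concrete distinguished triangle. Taking the cone of the multiplication-by-$\epsilon$ endomorphism $\epsilon^1_{1[0]}\colon X_1\to X_1$ produces the complex $A_{(-2)}\xrightarrow{\epsilon}A_{(-1)}=X_2$, so there is a distinguished triangle
\[
X_1\xrightarrow{\ \epsilon^1_{1[0]}\ }X_1\xrightarrow{\ 1^1_{2[0]}\ }X_2\xrightarrow{\ 1^2_{1[1]}\ }X_1[1],
\]
in which the second arrow is the inclusion into the top degree and the connecting map is the projection onto the degree $(-2)$ summand, i.e.\ exactly the $k_1$-type generator $1^2_{1[1]}$.

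Next I would apply $F$. Since $F$ is exact it sends this to a distinguished triangle, and by (C1) together with $k^i_{j[\alpha]}=\lambda^\alpha$ (so $k^1_{1[0]}=1$, $k^1_{2[0]}=\lambda^0=1$ and $k^2_{1[1]}=\lambda$) the image is
\[
X_1\xrightarrow{\ \epsilon^1_{1[0]}\ }X_1\xrightarrow{\ 1^1_{2[0]}\ }X_2\xrightarrow{\ \lambda\,1^2_{1[1]}\ }X_1[1].
\]
Both triangles share the same first two maps, so by axiom (TR3) the identity on the first two vertices extends to a morphism of triangles $h\colon X_2\to X_2$ (necessarily an isomorphism) with $h\circ 1^1_{2[0]}=1^1_{2[0]}$ and $1^2_{1[1]}=\lambda\,(1^2_{1[1]}\circ h)$. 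Writing $h=(a,b)$ as a $k^2$-type endomorphism of $X_2$ and invoking the composition table of Section~4 — concretely $\epsilon^2_{2[0]}\circ 1^1_{2[0]}=0$ and $1^2_{1[1]}\circ\epsilon^2_{2[0]}=0$, since in each case the relevant $k_\epsilon$-inequality fails — the first relation gives $h\circ 1^1_{2[0]}=a\,1^1_{2[0]}$, hence $a=1$, and the second collapses to $1^2_{1[1]}\circ h=1^2_{1[1]}$. Thus $1^2_{1[1]}=\lambda\,1^2_{1[1]}$, and as $1^2_{1[1]}\neq 0$ we conclude $\lambda=1$, which by the reduction above completes the proof.

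I expect the main obstacle to lie in the triangulated bookkeeping of the third step: identifying the connecting morphism of the cone triangle precisely as $1^2_{1[1]}$, and verifying that the two composition terms involving $h$ vanish, so that the (TR3) comparison genuinely forces $\lambda=1$ rather than merely relating $\lambda$ to the free parameter $b$. The remaining steps are a direct application of Theorem~\ref{teor:BehaviourFunctor} and of the composition rules already established.
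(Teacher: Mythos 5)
Your proposal is correct and follows essentially the same route as the paper: the same cone triangle $X_1\xrightarrow{\epsilon^1_{1[0]}}X_1\xrightarrow{1^1_{2[0]}}X_2\xrightarrow{1^2_{1[1]}}X_1[1]$, the same application of exactness to compare it with its image under $F$, and the same extraction of $a=1$ and $\lambda a=1$ from the resulting morphism of triangles. The only difference is that you spell out explicitly why the $\epsilon$-component $b$ of the comparison map $h=(a,b)$ drops out of both squares, which the paper leaves implicit in its ``commutative up to homotopy'' diagram.
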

\begin{proof}
It suffices to show that, if $F$ is exact, then $\lambda=k^2_{1[1]}=1$.
Consider the following distinguished triangle:
$$\xymatrix{
X_1 \ar[r]^{\epsilon^1_{1[0]}} & X_1 \ar[r]^i & C(\epsilon^1_{1[0]}) \ar[r]^p & X_1[1],
}$$
since the cone $C(\epsilon^1_{1[0]})$ on the morphism $\epsilon^1_{1[0]}$ is isomorphic to $X_2$, the triangle becomes:
\begin{equation} \label{eqn:DistinguishedTriangleStart}
\xymatrix{
X_1 \ar[r]^{\epsilon^1_{1[0]}} & X_1 \ar[r]^{1^1_{2[0]}} & X_2 \ar[r]^{1^2_{1[1]}} & X_1[1].
}
\end{equation}
Now $F$ sends the previous triangle in to the following one:
\begin{equation} \label{eqn:DistinguishedTriangle}
\xymatrix{X_1 \ar[r]^{\epsilon^1_{1[0]}} & X_1 \ar[r]^{1^1_{2[0]}} & X_2 \ar[r]^{1^2_{1[1]} \lambda} & X_1[1].}
\end{equation}
Since $F$ is exact, the triangle (\ref{eqn:DistinguishedTriangle}) is distinguished, hence it is isomorphic to the distinguished triangle (\ref{eqn:DistinguishedTriangleStart}). So we have
$$\xymatrix{
X_1\ar[r]^{\epsilon^1_{1[0]}}\ar[d]^{\id} & X_1\ar[r]^{1^1_{2[0]}}\ar[d]^{\id} & X_2\ar[r]^{\lambda 1^2_{1[1]}}\ar[d]^{a+\epsilon b} & X_1[1]\ar[d]^{\id}\\
X_1\ar[r]^{\epsilon^1_{1[0]}} & X_1\ar[r]^{1^1_{2[0]}} & X_2\ar[r]^{1^2_{1[1]}} & X_1[1].
 }$$
 The diagram is commutative up to homotopy, hence:
 $$\left\{\begin{array}{l}
 a=\lambda\\
 a=1\\
 \end{array}\right.$$

Thus $\lambda=1$ and, by Theorem \ref{teor:BehaviourFunctor}, the functor $F$ is the identity.
\end{proof}

\begin{cor} \label{cor:AutoequivalenceToFM}
Every exact autoequivalence of $\perfa$ is of Fourier-Mukai type.
\end{cor}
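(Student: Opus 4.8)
The plan is to assemble the statement from the reduction results already proved, exploiting that shifts, push-forwards along automorphisms of $\spec A$ and the identity are all of Fourier--Mukai type, together with the fact that the composite of two Fourier--Mukai functors is again of Fourier--Mukai type.

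So let $F$ be an exact autoequivalence of $\perfa$. First I would use Proposition \ref{prop:FunctorIsomorphicShift} and Remark \ref{oss:ShiftReduction}: since $F(X_i)\cong X_i[h]$ for a fixed $h\in\Z$ and $F$ commutes with shifts, the functor $F_1:=[-h]\circ F$ is isomorphic to the identity on objects, and it is still exact because the shift is exact. Next, Proposition \ref{prop:FunctorReduction} produces an automorphism $\phi_\mu$ of $\spec A$ (with $\mu=k^i_{i[0]}$, nonzero since $F$ is fully faithful and constant by Proposition \ref{prop:CoefficientNotDependingOnLength}) such that $F_2:=(\phi_\mu)_*\circ F_1$ satisfies condition (C1); push-forward along an automorphism being exact, $F_2$ remains exact. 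By Lemma \ref{lemma:NaturalIsomorphism} there is a natural isomorphism $F_2\cong F_2'$ with $F_2'$ satisfying (C1) and (C2), and $F_2'$ is exact because the isomorphism is natural. Finally, Corollary \ref{cor:ExactToIdentity} yields $F_2'\cong\id$.

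Chaining these isomorphisms gives $(\phi_\mu)_*\circ[-h]\circ F\cong\id$, hence
$$F\cong [h]\circ(\phi_\mu)_*^{-1}\cong [h]\circ(\phi_{\mu^{-1}})_*,$$
where I have used that $\phi_\mu^{-1}=\phi_{\mu^{-1}}$ is again an automorphism of $\spec A$. Both factors on the right are of Fourier--Mukai type, as recalled at the beginning of this section: the shift $[h]$ and the push-forward $(\phi_{\mu^{-1}})_*$ have as kernels, respectively, a shift of the structure sheaf of the diagonal and the structure sheaf of the graph of $\phi_{\mu^{-1}}$. It then remains to invoke closure of Fourier--Mukai functors under composition, so that $F$ itself is of Fourier--Mukai type.

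The argument is essentially bookkeeping once the earlier results are available; the only genuinely external ingredient, and thus the main point to be careful about, is the composition law for Fourier--Mukai functors. In the smooth projective case this is classical, but here $\spec A$ is non-smooth, so I would either cite the convolution formula $\Phi_{\mathcal{F}}\circ\Phi_{\mathcal{E}}\cong\Phi_{\mathcal{F}*\mathcal{E}}$ in the required generality, or --- which is cleaner given how simple our kernels are --- check directly that $[h]\circ(\phi_{\mu^{-1}})_*$ is the Fourier--Mukai functor with kernel the shifted structure sheaf of the graph of $\phi_{\mu^{-1}}$. With the structural work of Corollary \ref{cor:ExactToIdentity} already done, this is the only remaining obstacle.
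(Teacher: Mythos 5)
Your proof follows essentially the same route as the paper's: reduce $F$ to (a shift and a push-forward composed with) the identity via Corollary \ref{cor:ExactToIdentity} and the preceding reduction steps, then invoke that shifts and push-forwards along automorphisms of $\spec A$ are of Fourier--Mukai type and that Fourier--Mukai functors are closed under composition (the paper simply cites \cite{HUY}, Proposition 5.10, for the latter). Your added caution about the validity of the composition law in the non-smooth setting --- and the suggestion to check directly that $[h]\circ(\phi_{\mu^{-1}})_*$ has kernel the shifted structure sheaf of the graph --- is a sensible refinement of a point the paper glosses over, but it does not change the argument.
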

\begin{proof}
See \cite{HUY}, Proposition 5.10, for the proof that the composition of Fourier-Mukai functor is again of Fourier-Mukai type.
$F$ is the identity up to shifts and push forwards functors, which are both of Fourier-Mukai type. Hence $F$ itself is a Fourier-Mukai functor.
\end{proof}

\begin{cor} \label{cor:counterexample}
If $k \neq \Z_2$, then there exist an autoequivalence of $\perfa$ that is not exact.
\end{cor}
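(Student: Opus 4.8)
The plan is to produce the counterexample by exhibiting, for a suitable scalar, a fully faithful functor $F\colon\perfa\to\perfa$ whose coefficient $k^2_{1[1]}$ differs from $1$: by Corollary \ref{cor:fullyfaithfulequivalence} any such $F$ is automatically an autoequivalence, while Corollary \ref{cor:ExactToIdentity} will forbid it from being exact. Since $k\neq\Z_2$ the multiplicative group $k^*$ has more than one element, so I can fix $\lambda\in k^*\meno\{1\}$.

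The functor I would use is the \emph{graded rescaling by $\lambda$}. I would take $F$ to be the identity on objects (so that it commutes with shifts) and to be $k$-linear and additive on morphisms; by the Krull--Schmidt property it then suffices to prescribe its effect on the matrix entries of a map between finite direct sums of shifted indecomposables. On an entry lying in $\hom(X_i[\alpha],X_j[\beta])\cong\hom(X_i,X_j[\beta-\alpha])$ — a space which, by Proposition \ref{prop:ClassificationMorphisms}, is concentrated in a single shift-degree $\beta-\alpha$ — I declare $F$ to act as multiplication by $\lambda^{\beta-\alpha}$. In particular $F$ fixes every degree-zero morphism, hence every identity and the whole two-dimensional spaces $\hom(X_i,X_i)$, so (C1) and (C2) hold; moreover $k^2_{1[1]}=\lambda^1=\lambda$ and, comparing with Theorem \ref{teor:BehaviourFunctor}, the prescription recovers exactly the expected coefficients $k^i_{j[\alpha]}=\lambda^\alpha$.

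The one genuine point to verify — and what I regard as the crux — is that this assignment respects composition, i.e. that it really defines a functor. For composable entries $f\in\hom(X_i[\alpha],X_j[\beta])$ and $g\in\hom(X_j[\beta],X_l[\gamma])$, the composition table of Section 4 (cases (i)--(iv)) shows that $g\circ f$ again lands in a space concentrated in shift-degree $\gamma-\alpha$, or is zero; since $F(g)\circ F(f)=\lambda^{\gamma-\beta}\lambda^{\beta-\alpha}\,g\circ f=\lambda^{\gamma-\alpha}\,g\circ f$, the intermediate degree $\beta$ cancels and $F(g\circ f)=F(g)\circ F(f)$. The same cancellation persists after summing over intermediate indices in a matrix product, giving functoriality on arbitrary objects; this is precisely the additivity of shift-degree under composition read off from Section 4. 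Finally, because $\lambda\neq 0$, $F$ acts on each $\hom$-space as an invertible scalar map, so it is fully faithful and, by Corollary \ref{cor:fullyfaithfulequivalence}, an autoequivalence. Were it exact, Corollary \ref{cor:ExactToIdentity} would force $\lambda=k^2_{1[1]}=1$, contradicting the choice of $\lambda$; hence $F$ is a non-exact autoequivalence of $\perfa$.
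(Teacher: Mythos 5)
Your proposal is correct and follows essentially the same route as the paper: both construct the functor by fixing $\lambda\in k^*\meno\{1\}$, taking the identity on objects, rescaling a morphism concentrated in shift-degree $\alpha$ by $\lambda^{\alpha}$, verifying functoriality via the cancellation $\lambda^{\alpha}\lambda^{\beta-\alpha}=\lambda^{\beta}$ against the composition table of Section 4, and invoking Corollary \ref{cor:ExactToIdentity} to rule out exactness. Your write-up is somewhat more explicit than the paper's about extending the definition from indecomposables to arbitrary objects via matrix entries and about why (C1), (C2) and full faithfulness hold, but the underlying argument is the same.
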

\begin{proof}
Choose the coefficient $k^1_{2[1]} \neq 0, 1$, set all the coefficients as described in the Theorem \ref{teor:BehaviourFunctor}. The functor $F$ is well defined since all the compositions are well posed:
$$k^i_{j[\alpha]} k^{j[\alpha]}_{l[\beta]} = k^i_{j[\alpha]} k^{j}_{l[\beta-\alpha]}= \lambda^{\alpha + \beta - \alpha} = \lambda^{\beta} = k^i_{l[\beta]}.$$
By Corollary \ref{cor:ExactToIdentity}, $F$ is not exact.
\end{proof}

\section{Main Theorem}
The proof of Theorem \ref{teor:LuntsOrlovT} uses strongly the language of DG categories. A survey of the subject can be found in \cite{DGKeller}. Here are the definitions used in the proof.
\begin{defn}A DG category is a k-linear category $\mathcal{A}$ such that:
\begin{itemize}
  \item $\mathrm{Hom}(X,Y)$ is a $\mathbb{Z}$-graded $k$-module for every $X,Y\in \mathrm{Ob}(\mathcal{A})$.
  \item There is a differential $d:\mathrm{Hom}(X,Y)\longrightarrow \mathrm{Hom}(X,Y)$ of degree one, such that for every $X,Y,Z\in \mathrm{Ob}(\mathcal{A})$ the composition $\mathrm{Hom}(X,Y)\otimes \mathrm{Hom}(Y,Z)\longrightarrow \mathrm{Hom}(X,Z)$ is a morphism of DG $k$-modules.

\end{itemize}
\end{defn}
A \emph{DG functor} $\mathcal{F}:\mathcal{A}\longrightarrow \mathcal{B}$ between two DG categories is given by a map on the objects $\mathcal{F}:\mathrm{Ob}(\mathcal{A})\longrightarrow \mathrm{Ob}(\mathcal{B})$ and maps on the spaces of morphisms:
$$\mathcal{F}(X,Y):\mathrm{Hom}_{\mathcal{A}}(X,Y)\longrightarrow \mathrm{Hom}_{\mathcal{B}}(\mathcal{F}(X),\mathcal{F}(Y))$$
which are morphisms of DG $k$-modules and are compatible with the compositions and the units.\\
Given a DG category $\mathcal{A}$, we denote by $H^0(\mathcal{A})$ the \emph{homotopy category} associated to $\mathcal{A}$, which has the same objects of the DG category $\mathcal{A}$ and its morphisms are defined by taking the zeroth cohomology $H^0(\mathrm{Hom}_{\mathcal{A}}(X,Y))$.

\begin{defn}
A DG functor $\mathcal{F}:\mathcal{A}\longrightarrow \mathcal{B}$ is called a quasi-equivalence if $\mathcal{F}(X,Y)$ is a quasi-isomorphism for all objects $X,Y\in \mathcal{A}$ and the induced functor $H^0(\mathcal{F}):H^0(\mathcal{A})\longrightarrow H^0(\mathcal{B})$ is an equivalence. We say that two objects $a,b \in \mathcal{A}$ are homotopy equivalent if they are isomorphic in $H^0(\mathcal{A})$.
\end{defn}

\begin{defn}
Let $\mathrm{dgMod}$-$k$ be the DG category of  DG $k$-modules. Given a small DG category $\mathcal{A}$, every DG functor: $$\mathcal{M}:\mathcal{A}^{op}\longrightarrow \mathrm{dgMod}\text{-}k$$ is called a right DG $\mathcal{A}$-module.
\end{defn}

We denote by $\mathrm{dgMod}$-$\mathcal{A}$ the DG category of right DG $\mathcal{A}$-modules. Let $\mathrm{Ac}(\mathcal{A})$ be the subcategory of $\mathrm{dgMod}$-$\mathcal{A}$ consisting of all acyclic DG modules. \begin{defn}The derived category $\der(\mathcal{A})$ is the Verdier quotient between the homotopy category associated with $\mathrm{dgMod}$-$\mathcal{A}$ and the subcategory of acyclic DG modules:
$$\der(\mathcal{A}):=\frac{H^0(\mathrm{Mod}\text{-}\mathcal{A})}{H^0(\mathrm{Ac}(\mathcal{A}))}.$$
\end{defn}

Every object $X\in \mathcal{A}$ defines a \emph{representable} DG module:
$$h^X(-):=\mathrm{Hom}(-,X).$$The functor $h^{\bullet}$ is called the \emph{Yoneda functor}, and it is fully faithful.
\begin{defn}A DG $\mathcal{A}$-module $\mathcal{M}$ is called free if it isomorphic to a direct sum of shift of representable DG modules of the form $h^X[n]$, where $X\in \mathcal{A}$, $n\in \mathbb{Z}$.\end{defn}
\begin{defn}A DG $\mathcal{A}$-module $\mathcal{P}$ is called semi-free if it has a filtration: $$0=\phi_0\subset\phi_1\subset\phi_2\subset \ldots = \mathcal{P}$$ such that each quotient $\phi_i/\phi_{i-1}$ is free.\end{defn}

If $\phi_m=\mathcal{P}$ for some $m$ and $\phi_i/\phi_{i-1}$ is a finite direct sum of DG modules of the form $h^Y[n]$, then we call $\mathcal{P}$ a \emph{finitely generated semi-free} DG module. Denote by $\mathcal{SF}(\mathcal{A})$ the full DG subcategory of semi-free DG modules.

\begin{defn}
Given a small DG category $\mathcal{A}$ we denote by $Perf(\mathcal{A})$ the DG category of perfect DG modules, that is the full DG subcategory of $\mathcal{SF}(\mathcal{A})$ consisting of all DG modules which are homotopy equivalent to a direct summand of a finitely generated semi-free DG module.
\end{defn}

Recall that, given two DG categories $\mathcal{A}$ and $\mathcal{B}$, their tensor product $\mathcal{A}\otimes \mathcal{B}$ is again a DG category. See \cite{BLL} for references.

Let $\mathcal{A}$ and $\mathcal{B}$ be two DG categories, an \emph{$\mathcal{A}$-$\mathcal{B}$-bimodule} is a DG $\mathcal{A}^{op}\otimes \mathcal{B}$-module. A \emph{quasi-functor} from $\mathcal{A}$ to $\mathcal{B}$ is a $\mathcal{A}$-$\mathcal{B}$-bimodule $X\in \der(\mathcal{A}^{op}\otimes \mathcal{B})$ such that the tensor functor: $$(-)\otimes_{\mathcal{A}}X:\der(\mathcal{A})\longrightarrow \der(\mathcal{B})$$takes every representable DG $\mathcal{A}$-module to an object which is isomorphic to a representable DG $\mathcal{B}$-module.

\begin{defn}
Let $\T$ be a triangulated category. An enhancement of $\T$ is a pair $(B,\epsilon)$, where $\mathcal{B}$ is a pretriangulated DG category and $\epsilon$ : $H^0(\mathcal{B})\longrightarrow \T$ is an equivalence of triangulated categories.
\end{defn}

In the following we give a slight different version of \cite{LO}, Corollary 9.13, which extends the results in \ref{cor:AutoequivalenceToFM}.

\begin{teor} \label{teor:LuntsOrlovT}
Let $Y$ be a quasi-compact and separated scheme. Let:
$$F:\xymatrix{\perfa \ar[r] & \der(\qcoh Y)}$$
be a fully faithful functor. Then
there is an object $\mathcal{E}\in
\der(\qcoh (\spec A \times Y))$ such that:
$${\Phi_{\mathcal{E}}}|_{\perfa}\cong F.$$Furthermore, if $Y$ is noetherian and $F$ sends $\perfa$ to $\der^b(Y)$, then
$$\mathcal{E}\in \der^b(\spec A \times Y).$$
\end{teor}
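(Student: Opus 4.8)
The plan is to run the Fourier--Mukai argument of Lunts and Orlov \cite{LO} in the enhanced (DG) setting of Section $6$, but to replace the ample-sequence input --- the place where the hypothesis $T_0(\mathcal{O}_X)=0$ enters in \cite{LO} --- by the explicit structure of $\perfa$ established in Sections $2$--$5$. Concretely, I would regard $\perfa$ as $H^0(\mathcal{A})$ for the natural DG enhancement $\mathcal{A}$ of $\perf(A)$ coming from the one-object DG algebra $A$, so that $\perfa=\langle A\rangle$ is the thick subcategory classically generated by the single object $A=X_1$, with $\eend^*(A)=A$ concentrated in degree $0$ (Proposition \ref{prop:ClassificationMorphisms}). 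I would also fix a DG enhancement $\mathcal{B}$ of $\der(\qcoh Y)$ (for instance h-injective complexes), with equivalence $\varepsilon:H^0(\mathcal{B})\to\der(\qcoh Y)$. Once $F$ (which we use in the exact form of the abstract) is lifted to a quasi-functor $\Phi:\mathcal{A}\to\mathcal{B}$, this quasi-functor is classified by a DG bimodule in $\der(\mathcal{A}^{op}\otimes\mathcal{B})$, which I will identify with an object $\mathcal{E}\in\der(\qcoh(\spec A\times Y))$ whose induced tensor functor is $\Phi_{\mathcal{E}}$, the isomorphism $\Phi_{\mathcal{E}}|_{\perfa}\cong F$ being built into the lift.

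The heart of the matter, and the step I expect to be the main obstacle, is the lifting. I would choose $\mathcal{F}\in\mathcal{B}$ with $\varepsilon(\mathcal{F})\cong F(A)$ and consider its endomorphism DG algebra $E:=\hom_{\mathcal{B}}(\mathcal{F},\mathcal{F})$. Because $F$ is fully faithful and exact, its cohomology is $H^n(E)\cong\hom_{\der(\qcoh Y)}(F(A),F(A)[n])\cong\hom_{\perfa}(A,A[n])$, which by Proposition \ref{prop:ClassificationMorphisms} equals $A$ for $n=0$ and vanishes otherwise. Thus $E$ has cohomology concentrated in degree $0$, and by the standard truncation zig-zag $E\xleftarrow{\sim}\tau_{\le 0}E\xrightarrow{\sim}A$ it is formal, i.e. quasi-isomorphic to $A$ as a DG algebra (see \cite{DGKeller}). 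This formality is precisely the feature that is unavailable for a general $X$ and that forces the ample-sequence machinery in \cite{LO}; here it holds for free because $A$ is projective over itself and has no higher self-extensions. A choice of DG algebra quasi-isomorphism $A\simeq E$ makes $\mathcal{F}$ an $A$-$\mathcal{B}$-bimodule, hence a quasi-functor $\Phi=(-)\otimes^{\mathbf{L}}_A\mathcal{F}$, whose construction is unobstructed exactly because $E$ is formal.

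It then remains to convert $\Phi$ into a kernel and to match it with $F$. Since $\spec A$ is affine with coordinate ring the finite $k$-algebra $A$, the projection $\spec A\times Y\to Y$ is affine and $\qcoh(\spec A\times Y)$ is the category of $A\otimes_k\mathcal{O}_Y$-modules; I would invoke the resulting equivalence $\der(\mathcal{A}^{op}\otimes\mathcal{B})\cong\der(\qcoh(\spec A\times Y))$ of the type recorded in \cite{BLL} and \cite{LO}, under which the bimodule $\mathcal{F}$ corresponds to an object $\mathcal{E}$ and the tensor quasi-functor corresponds to the integral transform $\Phi_{\mathcal{E}}$. By construction $H^0(\Phi)$ is exact and fully faithful, sends $A$ to $F(A)$, and acts on $\eend^*(A)=A$ through the chosen identification; any residual discrepancy with $F$ on the generator is an automorphism of the algebra $A$ (a rescaling of $\epsilon$), that is the push-forward $\phi_{\mu*}$ of Proposition \ref{prop:FunctorReduction}, which is itself of Fourier--Mukai type and can be absorbed into $\mathcal{E}$. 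Since $\perfa=\langle A\rangle$ is generated by $A$, two exact functors reproducing $F$ together with its action on morphisms on the shifts of $A$ agree up to isomorphism by a dévissage over cones and summands (using the explicit indecomposables $X_i$ and the composition rules of Sections $3$--$4$); this yields $\Phi_{\mathcal{E}}|_{\perfa}\cong F$.

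For the final assertion, I would suppose $Y$ noetherian and $F(\perfa)\subseteq\der^b(Y)$. Then $\mathcal{F}=F(A)\in\der^b_{\coh}(\qcoh Y)$ is bounded with coherent cohomology, and $\spec A\times Y$ is noetherian because $A$ is Artinian. Tracking $\mathcal{F}$ through the equivalence above, the kernel $\mathcal{E}$ is obtained from $\mathcal{F}$ by the $k$-finite module $A$ together with an affine twist, so its cohomology sheaves are coherent over $\mathcal{O}_{\spec A\times Y}$ and bounded; hence $\mathcal{E}\in\der^b(\spec A\times Y)$, as claimed. The delicate points to watch throughout are the compatibility of the three conventions (left/right module structures, the equivalence of enhancements, and the definition of $\Phi_{\mathcal{E}}$) and the verification that formality genuinely removes every obstruction to the quasi-functor lift; everything else is bookkeeping built on the explicit computations of the earlier sections.
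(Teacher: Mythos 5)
Your proposal is correct in outline but reaches the kernel by a genuinely different route from the paper. The paper stays entirely inside the Lunts--Orlov machinery: it invokes their Theorem 6.4 to produce a quasi-equivalence $\mathcal{F}\colon Perf(A)\to\mathcal{C}$ onto the DG subcategory spanned by the image of $F$, extends to semi-free modules, applies their Theorem 9.10 to a functor commuting with direct sums to extract $\mathcal{E}$, and quotes their Corollary 9.13 for boundedness. You instead build the quasi-functor by hand from the formality of the endomorphism DG algebra $E=\hom_{\mathcal{B}}(\mathcal{F},\mathcal{F})$ of $F(A)$, which is legitimate precisely because $\hom(A,A[n])$ vanishes for $n\neq 0$ (Proposition \ref{prop:ClassificationMorphisms}); this is more self-contained, makes transparent why the hypothesis $T_0(\mathcal{O}_X)=0$ is irrelevant here, and your direct argument for the last assertion (the kernel is $F(A)$ with its $A$-action, hence bounded and coherent over the finite $k$-algebra $A\otimes_k\mathcal{O}_Y$) is a clean alternative to citing \cite{LO}, Corollary 9.13 --- all at the price of bimodule bookkeeping that the paper's black boxes absorb. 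Where the two arguments must converge is the comparison of $F$ with $H^0(\Phi)$, and there your phrase ``agree up to isomorphism by a d\'evissage over cones and summands'' is the one point to treat with care: taken literally, that principle is false for exact functors, since cones are not functorial --- this is the whole difficulty of the Fourier--Mukai existence problem. What actually closes the gap, both for you and for the paper, is Corollary \ref{cor:ExactToIdentity}: the comparison endofunctor of $\perfa$ is exact, fully faithful and normalized on $X_1$ and its endomorphisms, Theorem \ref{teor:BehaviourFunctor} shows it rescales degree-$\alpha$ morphisms by $\lambda^{\alpha}$, and exactness applied to the triangle $X_1\to X_1\to X_2\to X_1[1]$ forces $\lambda=1$. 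Your appeal to the explicit indecomposables and the composition rules of Sections 3--4 points at exactly these ingredients, but you should invoke Corollary \ref{cor:ExactToIdentity} by name rather than a generic d\'evissage; with that substitution your argument is complete.
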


\begin{proof}
We know by (\cite{LO}) that there exist enhancements of the derived categories $\der(\qcoh (\spec A))$ and
$\der(\qcoh (Y))$, we call them $\mathcal{D}_{dg}(\qcoh (\spec A))$ and $\mathcal{D}_{dg}(\qcoh (Y))$ respectively. Also, by \cite{LO} Proposition $1.17$, these enhancements are quasi equivalent to the DG categories $\mathcal{SF}(Perf(A))$ and $\mathcal{SF}(Perf(Y))$. Denote by:
$$\phi_A:\xymatrix{\mathcal{D}_{dg}(\qcoh A)\ar[r] & \mathcal{SF}(Perf(A))}$$
$$\phi_Y:\xymatrix{\mathcal{D}_{dg}(\qcoh Y)\ar[r] & \mathcal{SF}(Perf(Y))}$$
the corresponding quasi-functors. The functor $F$ induces an equivalence:
$$\tilde{F}:\xymatrix{\perfa\ar[r]^\sim & H^0(\mathcal{C})}$$
where $\mathcal{C}$ is the full DG subcategory in $\mathcal{SF}(Perf(Y))$ consisting of all objects in the essential image of $H^0(\phi_Y)\circ F$.
By \cite{LO}, Theorem $6.4$, there is a
quasi-equivalence:
$$\mathcal{F}:\xymatrix{Perf(A)\ar[r] & \mathcal{C}}$$ which induces a
quasi-equivalence:
$$\mathcal{F}^*:\xymatrix{\mathcal{SF}(Perf(A))\ar[r] & \mathcal{SF}(\mathcal{C})}.$$
Let $\mathcal{D}\subset
\mathcal{SF}(Perf(Y))$ be a DG subcategory that contains $Perf(Y)$
and $\mathcal{C}$. Denote by $\mathcal{J}: \xymatrix{\mathcal{C}\ar[r] & \mathcal{D} }$ and
$\mathcal{I}:\xymatrix{Perf(Y)\ar[r] & \mathcal{D}}$ the respective
embeddings. Let $\mathcal{H}:= \phi_Y^{-1}\circ \mathcal{I_*}\circ
\mathcal{J^*}\circ \mathcal{F^*}\circ \phi_A:
\xymatrix{\mathcal{D}_{dg}(\qcoh A)\ar[r] &
\mathcal{D}_{dg}(\qcoh Y)}$ be the functor that makes the following diagram be commutative:
$$\centerline{
\xymatrix{
 \mathcal{D}_{dg}(\qcoh A)\ar[rrr]^{\mathcal{H}}\ar[d]^{\phi_A} & &  & \mathcal{D}_{dg}(\qcoh Y)\ar[d]^{\phi_Y}\\
 \mathcal{SF}(Perf(A))\ar[r]^-{\mathcal{F}^*} & \mathcal{SF}(\mathcal{C})\ar[r]^{\mathcal{J}^*} & \mathcal{SF}(\mathcal{D})\ar[r]^-{\mathcal{I}_*} & \mathcal{SF}(Perf(Y))
 }}
$$
Notice that $H^0(\mathcal{H})$ commutes with direct sums, hence (\cite{LO}, Theorem $9.10$) the functor $H^0(\mathcal{H})$ is isomorphic to $\Phi_{\mathcal{E}}$ with $\mathcal{E}\in \der(\qcoh (\spec A \times Y))$.\\
As observed in the proof of \cite{LO}, the restriction of $\mathcal{I}_*\circ \mathcal{J}^*$ on $\mathcal{C}$ is isomorphic to the inclusion $\xymatrix{\mathcal{C}\ar[r] & \mathcal{SF}(Perf(Y))}$, hence the restriction ${\Phi_{\mathcal{E}}}|_{\perfa}$ is fully faithful.

Let $\mathfrak{A}$ be the full subcategory of $\perf(A)$ whose object is only $A$, and let $j:\mathfrak{A} \rightarrow \perfa$ be the natural embedding.

Define:
$$G:=H^0(\mathcal{F})^{-1}\circ
\tilde{F}:\xymatrix{\perfa\ar[r] & \perfa}$$
By \cite{LO}, Theorem $6.4$, there is an isomorphism of functors:
$$\xymatrix{j \ar[r]^\sim & G\circ j}$$on the category $\mathfrak{A}$. Hence, by Corollary \ref{cor:ExactToIdentity}, the functor $G$ is the identity on the whole $\perfa$. Therefore, the functors
$H^{0}(\mathcal{F})$ and $\tilde{F}$ are isomorphic, that is:
$$(H^0(\phi_Y)\circ H^0(\mathcal{H}))|_{\perf(A)} \cong (H^0(\phi_Y)\circ F)\Rightarrow {\Phi _{\mathcal{E}}}|_{\perfa}\cong F.$$

Finally if $Y$ is noetherian and $F$ sends $\perfa$ to $\der^b(Y)$, then \cite{LO}, Corollary $9.13$, implies: $$\mathcal{E}\in
\der^b(\mathrm{Coh}(\spec A \times Y)).$$

\end{proof}

\begin{cor}
Let $Y$ be a quasi-compact and separated scheme. Let: $$F:\xymatrix{\der^b(\spec A) \ar[r] & \der(\qcoh Y)}$$ be a fully faithful functor that commutes with homotopy colimits. Then there is an object $\mathcal{E}\in \der(\qcoh (X\times Y))$ such that:
$${\Phi_{\mathcal{E}}}|_{\dera}\cong F.$$
\end{cor}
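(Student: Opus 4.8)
The plan is to bootstrap from the Main Theorem (Theorem \ref{teor:LuntsOrlovT}) by restricting $F$ to the perfect complexes and then extending across the one extra family of indecomposables $X_\infty[h]$ via a homotopy colimit argument. First I would restrict $F$ along the inclusion $\perfa \subset \dera$; this restriction is again fully faithful, so Theorem \ref{teor:LuntsOrlovT} yields an object $\mathcal{E}\in \der(\qcoh(\spec A\times Y))$ together with a natural isomorphism $\eta\colon \Phi_{\mathcal{E}}|_{\perfa}\xrightarrow{\sim} F|_{\perfa}$. The crucial observation is that $\Phi_{\mathcal{E}}(-)=\mathbf{R}p_*(\mathcal{E}\stackrel{\mathbf{L}}{\otimes} q^*(-))$ is already defined on all of $\der(\qcoh(\spec A))$, hence in particular on $\dera$, so I have a candidate functor $\Phi_{\mathcal{E}}|_{\dera}$ and must only identify it with $F$.

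Next I would recall that, by Definition \ref{defn:Indecomposable} and the discussion following it, every indecomposable object of $\dera$ is, up to shift, either one of the perfect complexes $X_i$ or the non-perfect complex $X_\infty$, and that $X_\infty$ is the homotopy colimit of the directed system $X_1\to X_2\to\cdots$ whose transition maps are the inclusions of the brutal truncations (adjoining one free module on the left at each step). Indeed $X_\infty$ is quasi-isomorphic to a shift of the residue field $k$, whose minimal free resolution is exactly the system of the $X_i$, so $\operatorname{hocolim}_i X_i\cong X_\infty$. Since a Fourier--Mukai functor is assembled from $q^*$, $\stackrel{\mathbf{L}}{\otimes}$ and $\mathbf{R}p_*$, each of which commutes with arbitrary direct sums (here $p\colon \spec A\times Y\to Y$ is quasi-compact and separated, so $\mathbf{R}p_*$ preserves coproducts), the functor $\Phi_{\mathcal{E}}$ commutes with homotopy colimits; $F$ does so by hypothesis.

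I would then combine these facts. Because $\eta$ is natural, it is compatible with the transition maps $X_i\to X_{i+1}$; applying the two hocolim-preserving functors to the defining triangle of $\operatorname{hocolim}_i X_i$ therefore produces an isomorphism $\Phi_{\mathcal{E}}(X_\infty)\cong F(X_\infty)$ that prolongs $\eta$. As $\dera$ is Krull--Schmidt, every object is a finite direct sum of shifts of the $X_i$ and of $X_\infty$, so additivity of both functors promotes this object-wise agreement to an isomorphism of functors $\Phi_{\mathcal{E}}|_{\dera}\cong F$, which is the assertion.

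The step I expect to be the main obstacle is upgrading the object-wise agreement to a genuine \emph{natural} isomorphism on all of $\dera$: I would have to check that the isomorphism produced on $X_\infty$ is compatible with every morphism into and out of $X_\infty$. Using the explicit generators of $\hom(X_\infty,X_j[h])$ and $\hom(X_i,X_\infty[h])$ recorded in Remark \ref{oss:NonPerfectIndecomposable}, this reduces to verifying that these Hom-groups are computed as the appropriate (co)limits of the corresponding groups for the $X_i$ under the transition maps, so that naturality of $\eta$ on $\perfa$ forces naturality on the full subcategory generated by the $X_\infty[h]$. A secondary subtlety is that homotopy colimits of objects of $\dera$ need not stay in $\dera$ in general; one uses that the specific colimit $\operatorname{hocolim}_i X_i\cong X_\infty$ does land in $\dera$, which is exactly the situation the hypothesis on $F$ is meant to govern.
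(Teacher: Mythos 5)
Your argument is correct and rests on the same two pillars as the paper's: restrict $F$ to $\perfa$, invoke Theorem \ref{teor:LuntsOrlovT} to produce the kernel $\mathcal{E}$, and then use the hypothesis on homotopy colimits to propagate the isomorphism $\Phi_{\mathcal{E}}|_{\perfa}\cong F|_{\perfa}$ to the one remaining family of indecomposables $X_\infty[h]$. The difference lies in how the propagation is carried out. The paper does not redo this step at all: it observes that the proof of Corollary 9.14 in \cite{LO} uses the hypothesis $T_0(\mathcal{O}_X)=0$ only to know that $F|_{\perf(X)}$ is of Fourier--Mukai type, substitutes Theorem \ref{teor:LuntsOrlovT} for that input, and lets the rest of the Lunts--Orlov argument (formulated at the level of DG enhancements) run unchanged. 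You instead reconstruct the extension by hand, writing $X_\infty\cong\operatorname{hocolim}_i X_i$ (correct: $X_\infty$ is quasi-isomorphic to $k[1]$, the $X_i$ are the truncations of its minimal free resolution, and the transition maps are the generators $1^i_{i+1[0]}$, which are not null-homotopic) and using that $\Phi_{\mathcal{E}}$, being built from $q^*$, $\stackrel{\mathbf{L}}{\otimes}$ and $\mathbf{R}p_*$ with $p$ affine, preserves coproducts. This buys a concrete, self-contained argument tailored to dual numbers, at the price of the issue you yourself flag: the map induced on the third vertices of the two hocolim triangles is not canonical, so turning the object-wise isomorphism $\Phi_{\mathcal{E}}(X_\infty)\cong F(X_\infty)$ into a natural one requires a genuine, if finite, verification. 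In this category it is manageable --- since $\hom(X_\infty,X_\infty)\cong k$ by Remark \ref{oss:NonPerfectIndecomposable}, the isomorphism is unique up to a nonzero scalar, which can be normalized against the nonzero maps $X_1\to X_\infty$ and $X_\infty\to X_1$, and the naturality squares then need checking only on the one-dimensional generators of Remark \ref{oss:NonPerfectIndecomposable}, in the style of Section 5 --- but it is the one step you would have to write out in full, and it is precisely the step the paper's citation of \cite{LO} absorbs.
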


\begin{proof}
Corollary 9.14 in \cite{LO} shows a similar result: if $X$ is a projective scheme such that $T_0(\mathcal{O}_X)=0$ and $Y$ is a quasi-compact separated scheme, then for every fully faithful functor that commutes with homotopy colimits:
$$F:\der^b(X)\longrightarrow \der(\qcoh (Y))$$
there is an object $\mathcal{E}\in \der(\qcoh (X \times Y))$ such that:
$${\Phi_{\mathcal{E}}}|_{\der^b(X)}\cong F.$$
The authors assume $T_0(\mathcal{O}_X)=0$ in order to prove that the restriction of the functor $F$ to the subcategory of perfect complexes $\mathbf{Perf}(X)$ is of Fourier-Mukai type. In our case we have actually $T_0(\mathcal{O}_A)\neq 0$, but we have already shown in Theorem \ref{teor:LuntsOrlovT} that the restriction of $F$ to $\mathbf{Perf}(A)$ is a Fourier-Mukai functor. Hence we do not need this hypothesis and the proof follows as in Corollary 9.14, \cite{LO}.

\end{proof}

\section{$t$-structures on $\dera$}
This section is devoted to the study of all the possible $t$-structures on $\dera$. 

\begin{defn} \label{defn:TStructure}
Let $\T$ be a triangulated category. A $t$-structure on $\T$ is given by a full additive subcategory $\F$ such that:
\begin{itemize}
\item $\F[1] \subset \F$
\item For all objects $E$ in $\T$, there exists a distinguished triangle:
$$F \rightarrow E \rightarrow G$$
where $F \in \F$ and $G \in \F^\perp$.
\end{itemize}
The heart of a $t$-structure is the subcategory $\A:=\F \cap \F^\perp[1]$.
\end{defn}

All the $t$-structures we use in the following are bounded, that means every objects $E \in \T$ belongs to $\F[i] \cap \F^\perp[j]$ for some $i$ and $j$. The trivial $t$-structures on $\T$ are given by $\F=0$ or $\F=\T$.\\ It seems natural to wonder if a specific heart identifies a unique $t$-structure. An answer to this question has been given in Lemma 3.2 in \cite{SCOTC}, which states:
\begin{prop} \label{prop:PropertyHeart}
Let $\A$ be a full additive subcategory of a triangulated category $\T$. $\A$ is the heart of a bounded $t$-structure if and only if the following properties hold:
\begin{enumerate}
\item For every objects $A$ and $B$ of $\A$ and for every integer $h_1 > h_2$, then $\hom(A[h_1],B[h_2])=0$.
\item For every object $E$ of $\T$, there exist a finite sequence of integers $h_1 > h_2 > \ldots >h_n$ and a collection of distinguished triangles:
$$\xymatrix@C=1em{
0\ar[rr] && E_1\ar[rr]\ar[dl] && E_2\ar[r]\ar[dl] & \cdots\ar[r] & E_{n-1}\ar[rr] && E_n=E\ar[dl]\\
& A_1\ar@{-->}[ul] && A_2\ar@{-->}[ul] &&&& A_n\ar@{-->}[ul]
}$$
with $A_j \in \A[h_j]$ for all $j$.
\end{enumerate}
The subcategory $\F$ is then generated by extension of the subcategories $\A[h]$, $h \geq 0$.
\end{prop}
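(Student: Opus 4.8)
The plan is to prove the two implications separately, dispatching the ``only if'' direction by unwinding the definitions and concentrating the real work on the ``if'' direction, where a genuine $t$-structure must be manufactured out of (1) and (2).

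Suppose first that $\A=\F\cap\F^\perp[1]$ is the heart of a bounded $t$-structure. Property (1) I would read off directly from the orthogonality: since $\F[1]\subseteq\F$, passing to right orthogonals gives $\F^\perp[-1]\subseteq\F^\perp$, so $\F^\perp$ is stable under negative shifts. For $A,B\in\A$ and $h_1>h_2$ I rewrite $\hom(A[h_1],B[h_2])\cong\hom(A,B[h_2-h_1])$; here $A\in\A\subseteq\F$, while $B\in\A\subseteq\F^\perp[1]$ combined with $h_2-h_1\le -1$ and the shift-stability of $\F^\perp$ forces $B[h_2-h_1]\in\F^\perp$. The group therefore vanishes. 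For property (2) I would invoke the truncation functors of the $t$-structure: boundedness guarantees that each $E$ has only finitely many nonzero cohomology objects, and splicing the truncation triangles into a Postnikov tower yields exactly the filtration of (2), with graded pieces the nonzero cohomology objects placed into the appropriate $\A[h_j]$ and indices $h_1>\dots>h_n$.

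For the converse assume (1) and (2). Set $\F$ to be the extension closure of $\{\A[h]\mid h\ge 0\}$ and, as an auxiliary, let $\G$ be the extension closure of $\{\A[h]\mid h<0\}$. The condition $\F[1]\subseteq\F$ is immediate, since $\A[h][1]=\A[h+1]$ and $h\ge 0$ implies $h+1\ge 0$. To produce the decomposition triangle for an object $E$, I would take the filtration of (2) and cut it at the sign change of the indices: if $h_1>\dots>h_p\ge 0>h_{p+1}>\dots>h_n$, set $F:=E_p$, which is an iterated extension of $A_1,\dots,A_p$ and hence lies in $\F$, and let $G$ be the cone of the composite $E_p\to E$, an iterated extension of $A_{p+1},\dots,A_n$ and hence in $\G$. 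To know this is a valid $t$-structure decomposition I must check $G\in\F^\perp$, and this is precisely where (1) enters: by d\'evissage along the two filtrations, $\hom(\F,\G)=0$ reduces to the vanishing of $\hom(\A[h],\A[h'])$ for $h\ge 0>h'$, which is (1) since $h>h'$; in particular $\F^\perp\supseteq\G$.

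It remains to identify the heart $\F\cap\F^\perp[1]$ with $\A$. The inclusion $\A\subseteq\F\cap\F^\perp[1]$ is easy: $\A=\A[0]\subseteq\F$, and $\A[-1]\subseteq\G\subseteq\F^\perp$ gives $\A\subseteq\F^\perp[1]$. The reverse inclusion is the delicate point, and together with the equality $\F^\perp=\G$ it is what I expect to be the main obstacle. I would handle it by first proving that the filtration in (2) is unique up to isomorphism: this is the Harder--Narasimhan uniqueness argument, which compares the top graded pieces of two competing filtrations and forces them to agree using exactly the $\hom$-vanishing (1). Once uniqueness makes the indices $h_j$ genuine invariants of $E$, one checks (again via (1), through a short lemma that the canonical indices of an extension stay within the two half-lines) that membership in $\F$ is equivalent to all $h_j\ge 0$ and membership in $\F^\perp[1]$ to all $h_j\le 0$. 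An object of $\F\cap\F^\perp[1]$ then has all $h_j=0$, so $n=1$ and $E\cong A_1\in\A$; the same uniqueness yields $\F^\perp=\G$ and confirms that the aisle is the extension closure asserted in the final sentence, completing the proof.
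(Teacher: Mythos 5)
The paper does not actually prove this statement: it is quoted verbatim from Bridgeland (\cite{SCOTC}, Lemma 3.2) and no argument is given, so there is no in-paper proof to compare against. Your outline reconstructs the standard argument and its architecture is correct: the ``only if'' direction via shift-stability of $\F^\perp$ and the truncation Postnikov tower, and the ``if'' direction via the extension closure $\F$ of $\{\A[h]\mid h\ge 0\}$, the cut of the filtration at the sign change, and d\'evissage of $\hom(\F,\G)=0$ down to condition (1). The two sublemmas you defer --- uniqueness of the filtration in (2), and the fact that membership in $\F$ (resp.\ $\F^\perp[1]$) is detected by the signs of the canonical indices --- are indeed where the real content lives, and both are provable from (1) alone, so this is a sketch rather than a gap. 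One caution on the second sublemma: it cannot be obtained by naively merging the filtrations of the two ends of a triangle, because at this stage $\A$ is not yet known to be closed under extensions (that is a consequence of being a heart, not a hypothesis), so graded pieces landing in the same degree cannot simply be amalgamated inside $\A$. The clean route is the one your own setup already makes available: for $E\in\F$ take the canonical decomposition $F\to E\to G$ with $G\in\G\subseteq\F^\perp$, observe that $E\to G$ vanishes, so $G$ is a direct summand of $F[1]\in\F$ and hence $\mathrm{id}_G$ factors through $\hom(F[1],G)=0$, forcing $G=0$; the dual argument identifies $\F^\perp$ with $\G$ and then $\F\cap\F^\perp[1]=\A$ drops out. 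With that substitution your proof closes up completely.
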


The following remark gives some well known properties about $t$-structures and hearts. See \cite{FP}, \cite{INTRODUCTION} and \cite{SCOTC} for more details.

\begin{oss} \label{oss:PropertyOfHeart}
Given a heart of a bounded $t$-structure, the filtration provided by the property $(2)$ of Proposition \ref{prop:PropertyHeart} has the following properties:
\begin{enumerate}
\item The heart of a $t$-structure is an abelian subcategory closed under extensions. 
\item The filtration is unique up to isomorphisms. In particular, the shifts $k_j$ are fixed.
\item The filtration of the object $X[h]$ can be deduced from the filtration of $X$.
\item The filtration of the object $X \oplus Y$ can be deduced from the filtrations of $X$ and of $Y$.
\end{enumerate}
\end{oss}

The first case we are interested in is the case of $\perfa$. Holm, J{\o}rgensen and Yang proved, in the context of spherical objects (\cite{SPARSENESS}), that all the $t$-structures on $\perfa$ are trivial. The proof of this fact follows easily by a direct calculation: the only possible candidate for being an heart is the subcategory $\add{X_1[h]}$ (See Proposition \ref{prop:UniqueStructure} for a similar proof). However, such a subcategory does not satisfy the property $(2)$ of Proposition \ref{prop:PropertyHeart}.

Let us turn to analyze the case of the category $\dera$. In fact, it is generated, as triangulated category, by the indecomposable object $X_\infty$. This case is not covered by \cite{SPARSENESS}, since $\dera$ is not generated by any spherical object. 

\begin{oss}
The subcategory $\F=\{X \in \dera \st H^i(X)=0$ for every $i \geq 0\}$ is the standard $t$-structure on $\dera$. Its heart is the subcategory $A-\operatorname{mod}_{\operatorname{fg}}[1]=\add{X_1, X_\infty}$.
\end{oss}

\begin{prop} \label{prop:UniqueStructure}
Up to shift, the unique $t$-structure on $\dera$ is the standard one.
\end{prop}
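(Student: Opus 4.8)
The strategy is to use the characterization of hearts of bounded $t$-structures provided by Proposition \ref{prop:PropertyHeart}. Since every $t$-structure is determined by its heart, and shifting a $t$-structure shifts its heart accordingly, it suffices to classify the possible hearts up to shift and show that, apart from shifts of the standard heart $\A = \add{X_1, X_\infty}$, none satisfy both axioms (1) and (2) of Proposition \ref{prop:PropertyHeart}. First I would observe that by the Krull--Schmidt property (Theorem \ref{teor:BehaviourFunctor}'s ambient setting, and the classification of indecomposables in Definition \ref{defn:Indecomposable}), every object of $\dera$ decomposes into indecomposables drawn from $\{X_i[h]\}$ and $\{X_\infty[h]\}$, so a heart $\A$ is determined by which indecomposable shifts it contains.

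\emph{Identifying the candidate heart.} The key constraint is axiom (1): for objects $A, B \in \A$ and $h_1 > h_2$ one needs $\hom(A[h_1], B[h_2]) = 0$, equivalently $\hom(A, B[h]) = 0$ for all $h < 0$. I would run this against the explicit $\hom$-computations in Proposition \ref{prop:ClassificationMorphisms} and Remark \ref{oss:NonPerfectIndecomposable}. The crucial point is that $X_\infty$ must lie (up to the global shift we are allowed to normalize away) in the heart: since $\dera$ is generated as a triangulated category by $X_\infty$ and the $t$-structure is bounded, every object must be built by extensions from shifts of $\A$, and by Remark \ref{oss:PropertyOfHeart}(1) the heart is closed under extensions, so $\A$ cannot avoid $X_\infty$ entirely. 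Normalizing so that $X_\infty \in \A$, I would then use the self-$\hom$ of $X_\infty$ (generated by $1$ in nonnegative degrees, by Remark \ref{oss:NonPerfectIndecomposable}) to check axiom (1) is consistent, and use $\hom(X_\infty, X_i[h])$ and $\hom(X_i, X_\infty[h])$ to pin down which $X_i[h]$ may accompany it. The vanishing ranges there force exactly $X_1$ (in the same degree as $X_\infty$) to be admissible and exclude all other $X_i$ and all nonzero shifts, recovering $\A = \add{X_1, X_\infty}$.

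\emph{Verifying axiom (2) and ruling out alternatives.} Once the candidate heart is forced to be (a shift of) $\add{X_1, X_\infty}$, I would verify that this heart does satisfy property (2) of Proposition \ref{prop:PropertyHeart} --- which is immediate from the standard $t$-structure remark preceding the statement, since $\add{X_1,X_\infty} = A\text{-}\operatorname{mod}_{\operatorname{fg}}[1]$ is the heart of the standard $t$-structure and hence automatically admits the required filtrations. The genuine work, mirroring the sketch given for $\perfa$ just before the statement, is to show that no \emph{other} candidate survives: any subcategory $\A$ satisfying axiom (1) but differing from the standard heart must fail the existence of the finite Postnikov filtration in axiom (2), exactly as $\add{X_1[h]}$ fails it in the $\perfa$ case. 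I would argue this by taking a test object such as $X_\infty$ (or a well-chosen $X_i$) and showing that the morphism spaces $\hom(\A[h], X_\infty)$ do not admit a finite decreasing filtration with the required triangles, using the composition rules of Section 4.

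\emph{Main obstacle.} The main difficulty is the failure-of-axiom-(2) direction: proving a negative --- that a proposed heart does \emph{not} filter a given object --- requires a careful global argument rather than a single morphism computation, because one must exclude all conceivable finite Postnikov towers, not just the obvious one. I expect this to hinge on the infinite ``tail'' of $X_\infty$: any filtration of $X_\infty$ by shifts of a heart not containing $X_\infty$ itself would have to terminate after finitely many steps, yet the unbounded resolution of $X_\infty$ by copies of $A$ obstructs such finiteness. Making this obstruction precise --- tracking cohomology across the distinguished triangles of axiom (2) and showing the induction cannot close --- is the technical heart of the argument, and is where I would spend the most care.
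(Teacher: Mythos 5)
Your overall strategy coincides with the paper's: use axiom (1) of Proposition \ref{prop:PropertyHeart} together with the $\hom$-computations of Proposition \ref{prop:ClassificationMorphisms} and Remark \ref{oss:NonPerfectIndecomposable} to cut the candidate hearts down to (shifts of) $\add{X_1}$, $\add{X_\infty}$ and $\add{X_1,X_\infty}$, then kill the candidates lying inside $\perfa$ because they cannot generate $X_\infty$. That last exclusion is also much easier than your ``main obstacle'' paragraph suggests: $\perfa$ is a thick triangulated subcategory, so any finite tower of distinguished triangles with factors in $\add{X_1}[h_j]\subset\perfa$ has total object in $\perfa$, while $X_\infty\notin\perfa$; no cohomological bookkeeping is needed.

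The genuine gap is the candidate $\A=\add{X_\infty}$. Your $\hom$-vanishing analysis only shows that $X_1$ (in the same degree as $X_\infty$) is \emph{permitted} to accompany $X_\infty$, not that it \emph{must}; so after normalization you are left with two surviving candidates, $\add{X_\infty}$ and $\add{X_1,X_\infty}$, and your proposed mechanism for discarding leftovers --- the ``infinite tail'' obstruction to filtering $X_\infty$ --- does not apply to $\add{X_\infty}$, which does contain $X_\infty$. The paper closes this case in one line: by Remark \ref{oss:PropertyOfHeart}(1) a heart is closed under extensions, and the distinguished triangle $X_\infty \xrightarrow{\epsilon} X_1 \xrightarrow{1} X_\infty$ exhibits $X_1$ as an extension of $X_\infty$ by $X_\infty$, so any heart containing $X_\infty$ must also contain $X_1$; hence $\add{X_\infty}$ alone is not a heart and $\add{X_1,X_\infty}$ is the only possibility. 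You would either need this extension argument or a direct (and fussier) proof that $X_1$ admits no filtration with strictly decreasing shifts and factors in $\add{X_\infty}[h_j]$; as written, your proposal does not supply either.
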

\begin{proof}
We look for all possible hearts satisfying the two properties of Proposition \ref{prop:PropertyHeart}.
Since the heart $\A$ is abelian, it is sufficient to check which indecomposable objects does $\A$ contain.
Thanks to the first part of Proposition \ref{prop:PropertyHeart} it is easy to verify that, up to shifts, the only admissible candidates for hearts are $\A=\add{X_1}$, $\A=\add{X_\infty}$ and $\A=\add{X_1,X_\infty}$. The first case is not possible, since $X_1$ does not generate the whole category $\dera$. The distinguished triangle:
$$X_\infty \xrightarrow{\epsilon} X_1 \xrightarrow{1} X_\infty$$
is an extension of $X_1$ by elements of $\add{X_\infty}$, and so if $X_\infty$ is an element of $\A$, then $X_1$ is such. It follows that the unique possibility is $\A=\add{X_1,X_\infty}$.
\end{proof}

It could be interesting to look at the explicit construction of the filtration for the objects of $\dera$. The first step is to write the filtration of the indecomposable objects of $\dera$. $X_1$, $X_\infty$ and all the other elements of the heart have the filtration provided by the distinguished triangle $0 \rightarrow \square \rightarrow \square$. As for other indecomposables, by taking the cone one has the following exact triangle, for $1<i < \infty$:
$$X_\infty \xrightarrow{\epsilon} X_i[-i+1] \rightarrow X_\infty[-i+1].$$
The filtration of the indecomposable object $X_i[-i+1]$ is the following:
$$\xymatrix@C=1em{
0\ar[rr] && X_\infty\ar[rr]^\epsilon\ar[dl] && X_i[-i+1]\ar[dl]\\
& X_\infty\ar@{-->}[ul] && X_\infty[-i+1]\ar@{-->}[ul]
}$$
By Remark \ref{oss:PropertyOfHeart}, the filtration of other indecomposable objects can be obtained by shifting these ones above. Moreover, the filtration of every object $X$ of $\dera$  can be constructed by taking direct sums of the filtration of indecomposable objects that generate $X$.

\section{Stability conditions on $\dera$}
In this section we will describe the space $\stab(\dera)$ of stability conditions on $\dera$. The proofs of this section are inspired by \cite{COSTAB}, where J{\o}rgensen and Pauksztello describe the space of co-stability conditions on $\perfa$. We will give a brief description of stability conditions following \cite{SCOTC} and \cite{INTRODUCTION}.
\begin{defn}
Let $\A$ be an abelian category. A stability function on $\A$ is a group homomorphism $Z:K(\A) \rightarrow \C$ such that for every non zero object $E$ of $\A$, the number $Z(E)$ belongs to:
$$H=\left\{z \in \C \st z=\rho \exp(i \pi \phi), \rho \geq 0, 0 < \phi \leq 1 \right\}.$$
The phase of $E \in \A$ is the real number $(1/\pi) \arg{\left(Z(E)\right)} \in (0,1]$.\\
A non zero object $E \in \A$ is called semi-stable if every non zero sub-object $S \hookrightarrow E$ has the phase less or equal to the phase of $E$.

\end{defn}

Thanks to the results of previous section, we know that all the $t$-structures on $\dera$ are given by shifts of the standard one. In particular all the possible heart are $\A_h=\add{X_1[h],X_\infty[h]}$. The exact sequence:
$$0 \rightarrow X_\infty \xrightarrow{\epsilon} X_1 \xrightarrow{1} X_\infty \rightarrow 0$$
gives a relation in the Grothendieck group $\left[X_1[h]\right]=2\left[X_\infty[h]\right]$. It follows then that the Grothendieck group is the free abelian group generated by $X_\infty[h]$. All objects of the hearts $\A_h$ are semi-stable.

In order to give the stability function, it suffices to choose a vector $v$ in $H$ as the image of $X_\infty[h]$. 

\begin{defn} \label{defn:StabilityCondition} 
A stability condition $\sigma=(Z,\Psharp)$ on a triangulated category $\T$ is provided by a group homeomorphism $Z:K(\T) \rightarrow \C$ called \textit{central charge} and subcategories $\Psharp(\phi)$ of $\T$, indexed by $\phi \in \R$ such that:
\begin{enumerate}
	\item For every object $0 \neq E \in \Psharp(\phi)$, $Z(E)$ has phase $\phi$.
	\item $\Psharp(\phi+1)=\Psharp(\phi)[1]$ for all $\phi \in \R$.
	\item If $\phi_1 > \phi_2$, $E_i \in \Psharp(\phi_i)$, then $\hom_\T(E_1,E_2)=0$.
	\item Any non zero object $E$ admits a Harder-Narasimhan filtration, that is a finite number of inclusion
	$$0=E_0 \hookrightarrow E_1 \hookrightarrow \cdots \hookrightarrow E_{n-1} \hookrightarrow E_n = E$$
	such that $F_j=\cone(E_{j-1} \hookrightarrow E_j)$ are semistable object of  with phase
	$$\phi(F_1)> \cdots > \phi(F_{n-1}) >\phi(F_n)$$
\end{enumerate}
\end{defn}

$\stab(\T)$ denotes the set of stability conditions which are locally finite. Note that, if $K(\T)$ is discrete, as in the case we are dealing with, all the stability conditions are locally finite. Bridgeland proved that this space has a natural topology defined by a generalized metric. $\stab(\T)$ endowed with this topology, turns out to be a complex manifold. If the Grothendieck group is finitely generated, as in our case, this manifold is of finite dimension.

\begin{prop} \label{prop:GivingStability}
A stability condition on $\dera$ can be given by an integer $h$ and a vector $v \in H$.
\end{prop}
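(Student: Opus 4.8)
The plan is to show that the data of an integer $h$ together with a vector $v \in H$ is exactly enough to reconstruct a full stability condition $\sigma = (Z, \Psharp)$ on $\dera$, by exploiting the complete classification of $t$-structures obtained in Proposition \ref{prop:UniqueStructure}. The key observation is that every stability condition determines a bounded $t$-structure whose heart is the extension-closure of the semistable objects of phase in a half-open interval of length one; since by Proposition \ref{prop:UniqueStructure} every $t$-structure on $\dera$ is a shift of the standard one, the heart must be one of the categories $\A_h = \add{X_1[h], X_\infty[h]}$. Thus the integer $h$ records which shift of the standard heart we are using.

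First I would fix $h$ and work inside the heart $\A_h$. As already noted in the excerpt, the exact sequence $0 \to X_\infty \xrightarrow{\epsilon} X_1 \xrightarrow{1} X_\infty \to 0$ gives the relation $[X_1[h]] = 2[X_\infty[h]]$ in the Grothendieck group, so $K(\dera)$ is the free abelian group on the class $[X_\infty[h]]$. Therefore a central charge $Z : K(\dera) \to \C$ is completely determined by its value on this single generator, and the stability-function condition forces $Z(X_\infty[h]) = v$ to lie in $H$. This is exactly the choice of the vector $v \in H$. I would then check that this $Z$, together with the heart $\A_h$, defines a genuine stability function in the sense of the first definition of Section 8: every nonzero object of $\A_h$ is a direct sum of copies of $X_1[h]$ and $X_\infty[h]$, whose classes are $2v$ and $v$ respectively, both landing in $H$, and all such objects are semistable (as remarked before Definition \ref{defn:StabilityCondition}).

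Next I would promote this stability function on the heart to a full stability condition $(Z, \Psharp)$ on the triangulated category. Here I would invoke the standard equivalence, due to Bridgeland, between bounded stability conditions and pairs consisting of a bounded $t$-structure together with a stability function on its heart satisfying the Harder--Narasimhan property. The slicing $\Psharp(\phi)$ is built by declaring $\Psharp(\phi)$, for $\phi$ in the fundamental interval determined by the heart, to be the semistable objects of phase $\phi$ inside $\A_h$, and extending to all $\phi \in \R$ by the rule $\Psharp(\phi+1) = \Psharp(\phi)[1]$. The Harder--Narasimhan filtrations exist because every object of $\dera$ decomposes as a finite direct sum of shifts of the indecomposables $X_i$ and $X_\infty$, whose filtrations relative to the heart were written down explicitly at the end of Section 7; combining these with the semistability of the heart's indecomposables produces the required finite filtration with strictly decreasing phases.

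The main obstacle I anticipate is verifying the Harder--Narasimhan axiom cleanly, i.e. checking that the explicit filtrations of the indecomposable objects $X_i[-i+1]$ from Section 7 genuinely have strictly decreasing phases and semistable quotients for an arbitrary admissible $v$. Since the heart contains only two indecomposables up to isomorphism, all of which are declared semistable, and the Grothendieck group is rank one, the phases are heavily constrained and the filtration lengths are uniformly bounded; I expect this to reduce to a finite and essentially combinatorial check rather than a deep argument. Once existence of the Harder--Narasimhan filtrations is secured, local finiteness is automatic because $K(\dera)$ is discrete, as noted after Definition \ref{defn:StabilityCondition}, so the pair $(Z, \Psharp)$ is indeed an element of $\stab(\dera)$, completing the construction from the data $(h, v)$.
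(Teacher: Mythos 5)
Your proposal is correct and follows essentially the same route as the paper: both reduce to Bridgeland's correspondence between stability conditions and pairs consisting of a bounded heart and a stability function with the Harder--Narasimhan property, use the classification of $t$-structures to pin the heart down to $\A_h$, and use the rank-one Grothendieck group to reduce the central charge to the single vector $v$. The only divergence is the Harder--Narasimhan step, which the paper dispatches by noting the heart is artinian and noetherian, and which is in fact trivial here (no combinatorial check needed): every class in $K(\A_h)$ is a non-negative multiple of $[X_\infty[h]]$, so all nonzero objects of the heart have the same phase and are semistable.
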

\begin{proof}
By Proposition 5.3 of \cite{SCOTC}, it is sufficient to provide a heart of a bounded $t$-structure and a stability function. The Harder-Narashiman property required in the theorem is assured since the heart is artinian and noetherian. The integer $h$ specifies the heart $\A_h$ as described above, and $v$ describes the stability function $Z(X_\infty[h])=v$. 

The Grothendieck group $K(\A_h)$ is, for every $h$, isomorphic to the Grothendieck group of the whole category $\dera$ required in \ref{defn:StabilityCondition} (see \cite{GROT} for details).
The data $(h,v)$ correspond to the stability condition $\sigma=(Z,\Psharp)$ where the group homeomorphism $Z$ is given by the stability function as observed above. Let $\phi$ be the phase of $v$; $\Psharp$ is given by: 
$$\Psharp(\phi)=\add{X_1[h], X_\infty[h]}$$
and it is zero for all the other $\phi \in (0,1]$, these data extend to all $\phi \in \R$ by the property $(2)$ of Definition \ref{defn:StabilityCondition}.
\end{proof}

There are two group actions on the space $\stab(\T)$ (See \cite{SCOTC}, Lemma 8.2): a right action of $\widetilde{\gl^+}(2,\R)$ and a left action by isometries of the group of auto-equivalences of the category $\D$.

\begin{oss}
The elements of $\widetilde{\gl^+}(2,\R)$ (the universal covering of $\gl^+(2,\R)$)  are pairs $(G,f)$ where $G \in\gl^+(2,\R)$ and $f: \R \rightarrow \R$ such that:
\begin{itemize}
\item $f$ is an increasing map with $f(x+1)=f(x)+1 \forall x \in R$.
\item $\frac{G\exp{i\pi\phi}}{|G\exp{i\pi\phi}|}=\exp{i\pi f(\phi)}$.
\end{itemize}
\end{oss}

Let $S$ be the subgroup of $\widetilde{\gl^+}(2,\R)$ generated by rotations $(\exp{i\pi\theta},f(x)=x+\theta)$, $\theta \in \R$ and scalings $(k,f(x)=x)$, $k \in \R^+$. We have:
$$S=\{(k\exp{i\pi\theta},\quad f(x)=x+\theta)\quad\forall \theta \in \R,\quad k \in \R^+\}.$$
The action on $\stab(\T)$ is given by:
$$(G,f) \circlearrowright (Z,\Psharp(\phi)) = (G^{-1} \circ Z, \Psharp(f(\phi)).$$

\begin{lemma} \label{lemma:GroupAction}
The action of $S$ on $\stab(\dera)$ is free and transitive, hence $\stab(\dera) \cong S$.
\end{lemma}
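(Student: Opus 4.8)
The plan is to fix a base point and show that the orbit map $S \to \stab(\dera)$ is a bijection, splitting the claim into freeness and transitivity.

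First I would record the precise parametrization of $\stab(\dera)$. By Proposition \ref{prop:GivingStability} together with Proposition \ref{prop:UniqueStructure}, every $\sigma = (Z,\Psharp) \in \stab(\dera)$ is determined by a unique pair $(h,v)$ with $h \in \Z$ and $v \in H \setminus \{0\}$: the heart $\Psharp((0,1])$ must be one of the $\A_h = \add{X_1[h], X_\infty[h]}$, its Grothendieck group is freely generated by $[X_\infty[h]]$, and $v := Z(X_\infty[h])$ then fixes the central charge. Writing $v = \rho\exp(i\pi\phi)$ with $\rho > 0$ and $\phi \in (0,1]$, it is convenient to encode $\sigma$ by its modulus $\rho$ and its \emph{total phase} $T := \phi + h \in \R$, since $(h,\phi) \mapsto \phi + h$ is a bijection $\Z \times (0,1] \to \R$.

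Next I would spell out the action of $s = (k\exp(i\pi\theta), x \mapsto x+\theta) \in S$. By the formula $(G,f)\cdot(Z,\Psharp(\phi)) = (G^{-1}\circ Z, \Psharp(f(\phi)))$, the central charge is multiplied by $\tfrac1k\exp(-i\pi\theta)$ while each semistable slice $\Psharp(\psi)$ is carried to phase $\psi - \theta$. Using property $(2)$ of Definition \ref{defn:StabilityCondition} and the identity $\A_h[n] = \A_{h+n}$, this precisely rescales the modulus by $1/k$ and decreases the total phase by $\theta$, the sign $(-1)^{h'-h}$ produced when relabeling the heart being absorbed into the phase so that the new datum again has $\phi' \in (0,1]$; in the coordinates above the action is simply $(\rho,T) \mapsto (\rho/k, T-\theta)$.

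Then freeness is immediate: if $s$ fixes $\sigma$, applying $G^{-1}\circ Z = Z$ to $X_\infty[h]$ forces $\tfrac1k\exp(-i\pi\theta) = 1$, hence $k=1$ and $\theta \in 2\Z$; the condition $\Psharp(\phi+\theta) = \Psharp(\phi)$ then reads $\A_h[\theta] = \A_h$, which holds only for $\theta = 0$, so $s$ is the identity. For transitivity, given $\sigma_1 = (\rho_1, T_1)$ and $\sigma_2 = (\rho_2, T_2)$ I would first apply the scaling $(\rho_1/\rho_2, \id)$ to match the moduli and then the rotation with $\theta = T_1 - T_2$ to match the total phases; as $(\rho,T)$ is a complete set of coordinates, the composite element of $S$ sends $\sigma_1$ to $\sigma_2$. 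I expect the main obstacle to be exactly the bookkeeping of the middle step: verifying that a \emph{real} rotation by $\theta$ transforms the discrete datum $(h,v)$ correctly, tracking how the heart index jumps as the phase crosses the endpoints of $(0,1]$ and how the attendant sign on $Z(X_\infty[h'])$ conspires with property $(2)$ to return to the standard range. Once the total-phase coordinate is in place this collapses to the transparent formula $(\rho,T)\mapsto(\rho/k,T-\theta)$, from which free transitivity, and hence the homeomorphism $\stab(\dera) \cong S$, follows at once.
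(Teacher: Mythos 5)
Your proof is correct and follows essentially the same route as the paper's: both rest on the parametrization of $\stab(\dera)$ by pairs $(h,v)$ from Propositions \ref{prop:UniqueStructure} and \ref{prop:GivingStability} and produce the explicit element of $S$ matching modulus and phase (the paper sends every condition to the base point given by $\A_0$ and $Z(X_\infty)=-1$ and leaves freeness as a routine check, which you actually carry out). One small correction: since $X_\infty\in\Psharp(\phi-h)$, the total-phase coordinate in which the action becomes the clean translation $T\mapsto T-\theta$ is $T=\phi-h$ rather than $T=\phi+h$ (with your choice the heart index contributes an extra even shift and the stated formula fails), but this sign does not affect the validity of your freeness or transitivity arguments.
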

\begin{proof}
Let $(Z,\Psharp)$ be the given stability condition, as in Proposition \ref{prop:GivingStability}, by the pair $(h,v)$. Note that $v=|v|\exp{i \pi \phi_v}$. $\Psharp(\phi_v)=\add{X_1[h], X_\infty[h]}$ thus $\Psharp(\phi_v-h)=\add{X_1, X_\infty}$. Let $\theta=-h-\phi_v$ and $k=(|v|)^{-1}$. By the element $(k\exp{i\pi\theta},f(x)=x+\theta)$ of $S$ one can see that $(Z,\Psharp)$ belongs to the same orbit of the stability condition $(0,-1)$. Then the action is transitive. Moreover it is straightforward to verify that the action is also free.
\end{proof}

\begin{teor} \label{thm:Classification}
$\stab(\dera)$ is isomorphic to $\C$, the universal covering of $\C^*$.
\end{teor}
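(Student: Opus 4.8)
The plan is to assemble the pieces already in hand into the final identification. By Lemma \ref{lemma:GroupAction} we already know that the action of the subgroup $S \subset \widetilde{\gl^+}(2,\R)$ on $\stab(\dera)$ is free and transitive, so as a set $\stab(\dera)$ is in bijection with $S$. The task is therefore to recognise $S$ as a space isomorphic to $\C$ and to check that this bijection is compatible with the topology (indeed the complex-manifold structure) that Bridgeland puts on $\stab(\dera)$, so that ``isomorphic'' can be read as an isomorphism of complex manifolds.

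First I would give the explicit description of $S$ as a manifold. From the computation in the remark preceding the lemma, every element of $S$ is of the form $(k\exp{i\pi\theta},\, f(x)=x+\theta)$ with $k \in \R^+$ and $\theta \in \R$. Thus $S$ is parametrised bijectively by the pair $(k,\theta) \in \R^+ \times \R$. I would then write down the map $\R^+ \times \R \to \C$ sending $(k,\theta)$ to $\log k + i\pi\theta$ (or equivalently describe $S$ directly via the coordinate $(Z,\Psharp) \mapsto Z(X_\infty)$, using the free transitive action based at the stability condition $(0,-1)$). This map is a bijection of $\R^+ \times \R$ onto $\C$, and the key point is that the group law on $S$ — composition in $\widetilde{\gl^+}(2,\R)$, which multiplies the scalings $k$ and adds the angles $\theta$ — corresponds under $(k,\theta) \mapsto (\log k, \pi\theta)$ to addition in $\C \cong \R \times \R$. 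Hence $S$ is isomorphic to the additive group $\C$.

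Next I would exhibit $\C$ as the universal cover of $\C^*$. The exponential map $\C \to \C^*$, $z \mapsto \exp(z)$, is the standard universal covering; under the coordinate $z = \log k + i\pi\theta$ this sends $S$ to $k\exp(i\pi\theta)$, which records exactly the scaling-and-rotation datum, with the covering fibre over a given point of $\C^*$ consisting of the choices of $\theta$ differing by even integers (the lifts through the increasing function $f$). This is why one must pass to the universal cover rather than landing in $\C^*$ itself: the datum of the heart $\A_h$, equivalently the integer shift $h$ recorded in Proposition \ref{prop:GivingStability}, is precisely the extra $\Z$ worth of information that $\C^*$ forgets but $\widetilde{\gl^+}(2,\R)$ remembers through $f$.

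The step I expect to require the most care is verifying that the set-theoretic bijection $\stab(\dera) \cong S \cong \C$ is a homeomorphism, indeed an isomorphism of complex manifolds, rather than merely a bijection. Concretely, I would check that the generalised metric topology on $\stab(\dera)$ agrees with the topology on $S$ inherited from $\widetilde{\gl^+}(2,\R)$; since the group action is by isometries this is essentially automatic, but it should be stated. One must also confirm that the complex structure Bridgeland's deformation theorem places on $\stab(\dera)$ — near a point the stability condition is determined by the central charge $Z \in \hom(K(\dera),\C)$, and here $K(\dera)$ is free of rank one on $[X_\infty]$, so this space of central charges is a single copy of $\C$ — matches the $\C$ coming from $S$. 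Once these compatibilities are in place, the chain $\stab(\dera) \cong S \cong \C \xrightarrow{\exp} \C^*$ gives the claimed isomorphism, with $\C$ realised as the universal cover of $\C^*$, completing the proof.
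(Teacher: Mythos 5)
Your proposal is correct and follows essentially the same route as the paper: reduce via Lemma \ref{lemma:GroupAction} to identifying the subgroup $S$, then recognise $S$ as the universal cover of the copy of $\C^*$ given by $\{k\exp(i\pi\theta)\}\subset\gl^+(2,\R)$, hence as $\C$. Your version is simply more explicit about the coordinate $(k,\theta)\mapsto\log k+i\pi\theta$ and about checking compatibility with Bridgeland's topology, details the paper leaves implicit.
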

\begin{proof}
$\C$ is the universal covering of $\C^*$ by standard arguments.
By Lemma \ref{lemma:GroupAction}, it is sufficient to verify the claim for the subgroup $S$. $S$ is the universal covering of the subgroup of $\gl^+(2,\R)$ given by $\{k \exp(i \pi \theta), k \in \R^+, \theta \in \R \}$, which is isomorphic to $\C^*$.
\end{proof}

In fact there is a sort of dual notion of the $t$-structures, called co-$t$-structure. It was introduced in \cite{BONDARKO} and \cite{P} by Bondarko and Pauksztello. Moreover, in \cite{SPARSENESS} is shown that on $\perfa$ there are non-trivial co-$t$-structures. 

\begin{defn} \label{defn:CoTStructure}
Let $\T$ be a triangulated category. A co-$t$-structure on $\T$ is given by a full additive subcategory $\F$ such that:
\begin{itemize}
\item $\F[-1] \subset \F$
\item For all objects $E$ in $\T$, there exists a distinguished triangle:
$$F \rightarrow E \rightarrow G$$
where $F \in \F$ and $G \in \F^\perp$.
\end{itemize}
The co-heart of a co-$t$-structure is the subcategory $\A:=\F \cap \F^\perp[-1]$.
\end{defn}

There are important differences between $t$-structures and co-$t$-structures. One example is provided by the properties $(1)$ and $(2)$ of Remark \ref{oss:PropertyOfHeart}: there are examples of co-hearts of a co-$t$-structures that are not abelian and in general the filtration $(2)$ is not unique. Proposition 1.3.3 of \cite{BONDARKO} makes clear that the proof of Proposition \ref{prop:UniqueStructure} still works in the context of co-$t$-structures. The notion of co-stability conditions is also rather similar to the one of stability condition given in Definition \ref{defn:StabilityCondition}. Note that the inequality of part $(3)$ of definition is reversed
\begin{enumerate}
	\item[(co-3)] If $\phi_1 < \phi_2$, $E_i \in \Psharp(\phi_i)$, then $\hom_\T(E_1,E_2)=0$. 
\end{enumerate}
as are reversed the inequalities involving the shifts in the Harder-Narasimhan filtration
\begin{enumerate}
	\item[(co-4)] Any non zero object $E$ admits a Harder-Narasimhan filtration, that is a finite number of inclusion
	$$0=E_0 \hookrightarrow E_1 \hookrightarrow \cdots \hookrightarrow E_{n-1} \hookrightarrow E_n = E$$
	such that $F_j=\cone(E_{j-1} \hookrightarrow E_j)$ are semistable object of  with phase
	$$\phi(F_1) < \cdots < \phi(F_{n-1}) < \phi(F_n)$$
\end{enumerate}

The space $\costab(\dera)$ consisting of all the co-stability condition on a triangulated category $T$ is a topological manifold. By following \cite{COSTAB} and mimicking the proof of Theorem \ref{thm:Classification} one got the following proposition.

\begin{prop} \label{prop:Costab}
The co-stability manifold of $\dera$ is empty.
\end{prop}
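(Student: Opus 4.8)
The plan is to mirror the strategy of Theorem \ref{thm:Classification}: first reduce a co-stability condition to the combinatorial datum of a co-heart of a bounded co-$t$-structure, and then show that no such co-heart exists. A co-stability condition $\sigma=(Z,\Psharp)$ produces, via its slicing, a bounded co-$t$-structure on $\dera$ whose co-heart is the extension closure $\A$ of the semistable objects whose phase lies in a half-open window $(\phi_0,\phi_0+1]$; this is the co-analog of Bridgeland's correspondence between stability conditions and bounded $t$-structures (Proposition 5.3 of \cite{SCOTC}), with the inequalities reversed as in (co-3) and (co-4). In particular $\A$ is nonzero, since $\dera$ contains nonzero objects that must admit a Harder--Narasimhan filtration into semistables. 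Thus it suffices to prove that $\dera$ admits no bounded co-$t$-structure with nonzero co-heart.

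Next I would classify the candidate co-hearts, following the proof of Proposition \ref{prop:UniqueStructure} but with the orthogonality condition adapted to co-$t$-structures. From Definition \ref{defn:CoTStructure} one checks, using that $\F[-1]\subset\F$ forces $\F^\perp[1]\subset\F^\perp$, that for any co-heart $\A$ and any $A,B\in\A$ one has $\hom(A,B[n])=0$ for all $n>0$ --- the direction opposite to property $(1)$ of Proposition \ref{prop:PropertyHeart}; this is precisely the point where Proposition 1.3.3 of \cite{BONDARKO} is invoked. Since $\A$ is, up to shift, an $\add$ of indecomposables, I would test each indecomposable against this vanishing using Remark \ref{oss:NonPerfectIndecomposable}. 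The decisive observation is that $\hom(X_\infty,X_\infty[1])\neq 0$, so neither $X_\infty$ nor any of its shifts can belong to a co-heart. Hence every co-heart is contained in $\perfa$.

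The main obstacle --- and the heart of the argument --- is then to derive a contradiction from boundedness. In a bounded co-$t$-structure every object of $\dera$ carries a finite weight filtration whose graded pieces are shifts of objects of $\A$. If $\A\subseteq\perfa$, then, because $\perfa$ is a thick triangulated subcategory of $\dera$ (closed under shifts, cones and direct summands), every object so filtered lies in $\perfa$. But $X_\infty\in\dera\smallsetminus\perfa$; concretely, the only way to assemble $X_\infty$ from shifts of $X_1=A$ by truncations is through an infinite filtration, which a bounded co-$t$-structure forbids. This contradiction shows $\A$ cannot generate $\dera$, so no bounded co-$t$-structure with nonzero co-heart exists.

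Putting the two halves together, the absence of a suitable co-heart rules out any co-stability condition, and therefore $\costab(\dera)$ is empty. I expect the only delicate points to be the precise formulation of the passage from slicing to bounded co-$t$-structure (ensuring the chosen window yields a genuine co-$t$-structure) and the verification that the co-heart orthogonality truly expels $X_\infty$; both are controlled by the explicit Hom computations of Remark \ref{oss:NonPerfectIndecomposable} together with Bondarko's framework.
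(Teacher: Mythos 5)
Your proposal is correct and follows essentially the same route as the paper: the paper likewise reduces to showing $\dera$ has no bounded co-$t$-structure, notes (via the bijection with silting subcategories from \cite{MSS}) that the co-heart must satisfy $\hom(\mathcal{M},\mathcal{M}[h])=0$ for $h>0$, uses $\hom(X_\infty,X_\infty[1])\cong k$ to force the co-heart into $\perfa$, and concludes by thickness of $\perfa$ that it cannot generate $\dera$. The only cosmetic difference is that you argue directly with co-hearts and Bondarko's weight filtration where the paper phrases the same facts through silting subcategories.
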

\begin{proof}
As for the stability manifold of $\perfa$, it is sufficient to prove that there are no bounded co-$t$-structure in $\dera$. One can find, for example in \cite{MSS}, that such co-$t$-structures are in bijection with silting subcategories of $\dera$.
Recall that a subcategory $\mathcal{M}$ of a triangulated category $\T$ is called silting if $\hom_\T(\mathcal{M},\mathcal{M}[h])=0$ for every $h$ greater than zero and $\T$ is the smallest full triangulated category containing $\mathcal{M}$ and closed under direct summand.
Since all the non perfect indecomposable objects of $\dera$ are shifts of $X_\infty$ and $\hom(X_\infty,X_\infty[1]) \cong k$. It follows that all the possible stilting subcategories of $\dera$  are contained in $\perfa$. But then, since $\perfa$ is Karoubian closed, they can generate at most $\perfa$. Hence there are no silting subcategories of $\dera$.
\end{proof}


\medskip

{\small\noindent{\bf Acknowledgements.}
This paper was written while the authors were working on their Ph.D thesis. F.A. wants to thank the Dipartimento di Matematica ``F. Enriques'' of the Universit{\`a} degli Studi di Milano and the national research project "Geometria delle Variet\'a Proiettive" (PRIN 2010-11) for financial support. R.M. wants to thank the Dipartimento di Matematica  ``F. Casorati'' of the Universit{\`a} degli Studi di Pavia, the FAR 2010 (PV) \emph{"Variet\`a algebriche, calcolo algebrico, grafi orientati e topologici"} and INdAM (GNSAGA) for financial support. It is a pleasure to thank professors Alberto Canonaco and Paolo Stellari for their useful suggestions and helpful discussions. We want also to thank an anonymous referee for the careful reading of the manuscript and helpful comments and suggestions. 
}


\end{document}